\numberwithin{equation}{section}
\theoremstyle{plain}
\newtheorem{thm}{Theorem}
\newtheorem{thmdef}{Theorem}
\newtheorem{defn}[thmdef]{Definition}
\newtheorem{athmdef}{Theorem}
\numberwithin{athmdef}{section}
\newtheorem{adefn}[athmdef]{Definition}
\newtheorem{alemma}{Lemma}
\numberwithin{alemma}{section}
\newtheorem{thmlem}{Theorem}
\newtheorem{lemma}[thmlem]{Lemma}
\newtheorem{thmcor}{Theorem}
\newtheorem{corollary}[thmcor]{Corollary}
\newcommand{\calO}{{\cal O}}
\newcommand{\calF}{{\cal F}}
\newcommand{\hSig}{\widehat \Sigma}
\newcommand{\tSig}{\widetilde \Sigma}
\newcommand{\tu}{\tilde u}
\newcommand{\trho}{\tilde\rho}
\newcommand{\tZ}{\widetilde Z}
\newcommand{\ty}{\widetilde y}
\newcommand{\tG}{\widetilde G}
\newcommand{\betah}{\hat \beta}
\newcommand{\lam}{\lambda}
\newcommand{\sigs}{\sigma^2}
\newcommand{\taus}{\tau^2}
\newcommand{\eps}{\epsilon}
\newcommand{\Siga}{\Sigma_A}
\newcommand{\linf}{\|_{\infty}}
\newcommand{\vhat}{\hat v}
\newcommand{\bI}{\mathrm I}
\newcommand{\dt}{\delta (\eps,\zeta)}
\begin{document}

\begin{frontmatter}
\title{CoCoLasso for High-dimensional Error-in-variables Regression}
\runtitle{Convex conditioned Lasso for Corrupted Data}

\begin{aug}
\author{\fnms{Abhirup} \snm{Datta}\ead[label=e1]{datta013@umn.edu}}
\and
\author{\fnms{Hui} \snm{Zou}
\ead[label=e2]{zouxx019@umn.edu}}

\runauthor{Datta and Zou}

\affiliation{University of Minnesota}

\address{Abhirup Datta\\
University of Minnesota\\
\printead{e1}}

\address{Hui Zou \\
University of Minnesota\\
\printead{e2}\\}
\end{aug}

\begin{abstract}
Much theoretical and applied work has been devoted to high-dimensional regression with clean data. However, we often face corrupted data in many applications where  missing data and measurement errors cannot be ignored.
Loh and Wainwright (2012) proposed a non-convex modification of the Lasso for doing high-dimensional regression with noisy and missing data. It is generally agreed that the virtues of convexity contribute fundamentally the success and popularity of the Lasso.
In light of this, we propose a new method named CoCoLasso that is convex and can handle a general class of corrupted datasets including the cases of additive measurement error and random missing data. 
We establish the estimation error bounds of CoCoLasso and its asymptotic sign-consistent selection property. We further elucidate how the standard cross validation techniques can be misleading in presence of measurement error and develop a novel corrected cross-validation technique by using the basic idea in CoCoLasso. The corrected cross-validation has its own importance. We demonstrate the superior performance of our method over the non-convex approach by simulation studies.
\end{abstract}

\begin{keyword}[class=MSC]
\kwd[Primary ]{62J07}
\kwd[; secondary ]{62F12}
\end{keyword}

\begin{keyword}
\kwd{Convex optimization}
\kwd{Error in variables}
\kwd{High-dimensional regression}
\kwd{LASSO}
\kwd{Missing data}
\end{keyword}

\end{frontmatter}

\section{Introduction} High-dimensional regression has wide applications in various fields such as genomics, finance, medical imaging, climate science, sensor network, etc. The current 
inventory of high-dimensional regression methods includes Lasso \citep{lasso}, SCAD \citep{scad}, elastic net \citep{enet}, adaptive lasso \citep{adalasso} and Dantzig selector \citep{dantzig} among others. The articles \cite{fanli06} and \cite{fanlv10} provide an overview of these existing methods while the book by \cite{buhl} discusses their statistical properties in finer details. The canonical high-dimensional linear regression model assumes that the number of available predictors ($p$) is larger than the sample size ($n$), although the true number of relevant predictors ($s$) is much less than $n$. The model is expressed as $y=X\beta^* + w$ where $y=(y_1,\ldots,y_n)'$ is the vector of responses, $X=((x_{ij}))$ is the $n\times p$ matrix of covariates, $\beta^*$ is a $p\times1$ sparse coefficient vector with only $s$ non-zero entries and $w=(w_1,\ldots,w_n)'$ is the noise vector. 

Much of the existing theoretical and applied work on high-dimensional regression has focused on the clean data case. However, we often face corrupted data in many applications where the covariates  are observed inaccurately or have missing values. Common examples include sensor network data \citep{senserror}, high-throughput sequencing \citep{thru}, and gene expression data \citep{generror}. It is well known that misleading inference results will be obtained if the regression method for clean data is naively applied to the corrupted data. 
In order to facilitate further discussion, we assume that we observe a corrupted covariate matrix $Z=(z_{ij})_{1 \le i \le n, 1 \le j \le p}$ instead of the true covariate matrix $X$. Depending on the context, there can be various ways to model the measurement error. In the additive model setup, $z_{ij}=x_{ij}+a_{ij}$ where $A=(a_{ij})$ is the additive error matrix. In the multiplicative error setup, $z_{ij}=x_{ij}m_{ij}$ where $m_{ij}$s are the multiplicative errors. Missing predictors can be interpreted as a special case of multiplicative measurement errors with $m_{ij}=I(x_{ij} \mbox{ is not missing})$ where $I(\cdot)$ is the indicator function. 

Without loss of generality, we take the Lasso as an example to illustrate the impact of measurement errors. We apply the Lasso to the clean data by minimizing: 
\begin{equation}\label{eq:lasso_orig}
1/(2n)\| y - X\beta \| ^2_2 + \lambda \|\beta\| _1
\end{equation} with respect to $\beta$. Here $\lambda > 0$ is the regularization parameter and $\| \cdot \|_ p$ denotes the $\ell_p$ norm for vectors and matrices for $1\leq p \leq \infty$. If we ignore the measurement error issue, we would apply the Lasso to the corrupted data by minimizing:
\begin{equation}\label{eq:lasso_corrupt}
1/(2n) \| y -Z \beta \| ^2_2 + \lambda \|\beta\| _1.
\end{equation} 
However, as pointed out in \citep{rose10}, the resulting estimate of $\beta$ is often erroneous if the noise is large. We need to find a proper modification of (\ref{eq:lasso_corrupt}) such that its solution is comparable/close to the clean Lasso estimate  (\ref{eq:lasso_orig}).

Observe that the clean Lasso objective function can be equivalently formulated as
 \begin{equation}\label{eq:lasso00}
\frac 12 \beta'\Sigma\beta - \rho'\beta + \lam \|\beta \|_1 \mbox{ where } \Sigma = \frac 1n X'X \mbox{, } \rho = \frac 1n X'y.
\end{equation} 
In \citep{loh12} Loh and Wainwright use $Z$ and $y$ to construct unbiased surrogates $\hSig$ for  $\Sigma$ and $\trho$ for $\rho$. To elucidate, let us consider the classical additive measurement error case. 
Following \citep{loh12}, assume the additive errors $a_{ij}$ are independent with mean zero and variance $\tau^2$ where $\tau^2$ is a known constant, then 
\[
E[\frac 1n Z'Z]=\frac 1n X'X+\tau^2 \bI ,\quad E[\frac 1n Z'y-\frac 1n X'y]=0 .
\]
Thus Loh and Wainwright suggested using unbiased surrogates
\begin{equation}\label{eq:unbias}
\hSig=\frac 1n Z'Z-\tau^2\bI,  \quad \trho=\frac 1n Z'y
\end{equation} 
and then solve the following optimization problem to get an estimate of $\beta$:
 \begin{equation}\label{eq:lasso11}
\frac 12 \beta'\hSig\beta - \trho'\beta + \lam \|\beta \|_1 .
\end{equation}

Although the above solution is very natural, (\ref{eq:lasso11}) is fundamentally different from the clean Lasso. Notice that $\hSig$ may not be positive semi-definite. When  $\hSig$ does have a negative eigenvalue (which happens very often under high-dimensionality),  the objective function in (\ref{eq:lasso11}) is no longer  convex. Moreover, the objective function is unbounded from below when  $\hSig$ has a negative eigenvalue. To overcome these technical difficulties, Loh and Wainwright defined their estimator as
 \begin{equation}\label{eq:lasso22}
\hat \beta \in \underset{\|\beta \|_1 \le b_o \sqrt s }{\arg\min} \; \frac 12 \beta'\hSig\beta - \trho'\beta + \lam \|\beta \|_1 .
\end{equation} 
for some constant $b_0$. 
Note that ``$\in$" not ``$=$" is used in (\ref{eq:lasso22}) because the objective function may still have multiple local/global minimizers even within the region $\|\beta \|_1 \le b_o \sqrt s $. 
Through some careful analysis, Loh and  Wainwright showed that, if $b_0$ is properly chosen, a projected gradient descent algorithm will converge in polynomial time to a small neighborhood of the set of all global minimizers. 

In this article we propose the {\em Convex Conditioned Lasso (CoCoLasso)} --- a convex formulation of the Lasso that can handle a general class of corrupted datasets including the cases of additive or multiplicative measurement error and random missing data. 
CoCoLasso automatically enjoys the theoretical and computational benefits of convexity that contribute fundamentally to the success of the Lasso. Theoretically, we derive the statistical error bounds of CoCoLasso which are comparable to those given in \cite{loh12}.  Additionally, we establish the asymptotic sign-consistent selection property of CoCoLasso. Earlier \cite{oys13} derived asymptotic selection consistency properties for the estimator in (\ref{eq:lasso22}) only for the restrictive case of additive measurement error. However, our result does not require any specification of the type of measurement error. This is arguably the most general result for sign consistency in presence of measurement error. There is no sign-consistency result for the non-convex approach by Loh and  Wainwright.

Our method has another significant advantage over the non-convex approach by Loh and Wainwright. As mentioned earlier, choosing $b_0$ in (\ref{eq:lasso22}) is critically important to the estimator by Loh and Wainwright. Their theory requires  $b_0 \geq ||\beta^*\|_2$ in order to have desirable error bounds. Note that $\beta^*$ is unknown. 
On the other hand, $b_0$ cannot be too large due to the required lower-RE and upper-RE conditions. See Theorem 1 in (\ref{eq:lasso22}) for details. Therefore, in practice one has to carefully choose the $b_0$ value. Our method does not have this concern. From a pure practical viewpoint, our method uses one tuning parameter $\lambda$ while the non-convex approach needs two tuning parameters $b_0$ and $\lambda$. CoCoLasso can be readily solved by any efficient algorithm for solving the clean Lasso. For example, we can use the LARS algorithm \citep{lars} to efficiently compute the entire solution paths for CoCoLasso estimates as $\lam$ continuously varies. This is particularly useful for practitioners to understand the procedure. 

We notice that in the current literature little attention has been paid to the cross validation methods used for corrupted data. Simply replacing $Z$ by $X$ leads to biased version of the cross validation procedure (similar to (\ref{eq:lasso11}) being a biased version of (\ref{eq:lasso00})). We demonstrate how the ideas used to develop CoCoLasso can be seamlessly adapted to propose new corrected cross-validation technique tailored for data with measurement error. To our best knowledge, the existing work on high-dimensional regression with measurement error did not touch on this cross-validation issue. The new corrected cross-validation has its own independent importance.

It is worth pointing out that a Dantzig Selector type estimator named matrix uncertainty (MU)  estimator was proposed in \cite{rose10} for additive measurement error models.
An improved version of MU estimator was proposed in \cite{rose13}. \cite{linandconic} establishes near-optimal minimax properties of the estimator in \cite{rose13} and develops a conic-programming based estimator that achieves minimax bounds. Two more conic programming based estimators have been recently proposed in \cite{l1l2linf} for the same model setup. 
It has been empirically observed that solving the Lasso problem can be much faster than solving the Dantzig selector \cite{disc_dantzig_efron}.
Compared to Dantzig Selector type estimators and the conic programming based estimators, the direct Lasso-modification methods, such as CoCoLasso, would enjoy computational advantages, which is very important for high-dimensional data analysis.
 
The rest of the article is organized as follows. In Section \ref{sec:gen} we define the CoCoLasso estimator. In Section~\ref{sec:main} we discuss the main theoretical results. In Section \ref{sec:part} we discuss the consequences of the results in Section~\ref{sec:main} for additive and multiplicative measurement error setups. A new cross-validation technique for corrupted data is developed in Section~\ref{sec:cross}. In Section \ref{sec:sim} we present simulation results to demonstrate the empirical performance of CoCoLasso.

\section{CoCoLasso}\label{sec:gen}
We first introduce some necessary notations and model setup. For any matrix $K=((k_{ij}))$, we write $K > 0$ ($\geq 0$) when it is positive (semi-)definite. Let $\| K \|_ \infty= \max_i \sum_j |k_{ij}|$ denote the matrix $\ell_ \infty$ norm whereas $\|K \|_{\max}= \max_{i,j} \; |k_{ij}|$ denote the elementwise maximum norm. Also let $\Lambda_{\min}(K)$ and $\Lambda_{\max}(K)$ denote the minimum and maximum eigen values of $K$ respectively. We assume that all variables are centered so that the intercept term is not included in the model and the covariance matrix $X$ has normalized columns i.e. $\frac 1n \sum_{i=1}^n x_{ij}^2 = 1$ for every $j=1,\ldots,p$.  Without loss of generality, assume that $S=\{1,2,\ldots,s\}$ is the true support set of the regression coefficient vector and write $\beta^*=(\beta^{*T}_S, 0')' $ and $X=\left( X_S,X_{S^c} \right)$. Hence the true model can be rewritten as $y=X_S\beta^*_S+w$ where the components of $\beta^*_S$ are non-zero. For any vector $v$, we can partition it as  $v=(v_S',v_{S^c}')'$. Also, we partition $\Sigma$ as \[ \Sigma = \left ( \begin{array}{cc} (1/n) X_S'X_S & (1/n)X_S'X_{S^c} \\(1/n)X_{S^c}'X_S & (1/n)X_{S^c}'X_{S^c} \end{array} \right) = \left ( \begin{array}{cc} \Sigma_{S,S}  & \Sigma_{S,S^c} \\ \Sigma_{S^c,S}  & \Sigma_{S^c,S^c} \end{array} \right)\]
In this work we consider the fixed design case because we want to avoid the identifiability issues between the true design matrix and the measurement error matrix. 
In the theoretical literature on the clean Lasso, it is often assumed that $w_i$'s are independent and identically distributed sub-Gaussian random variables with parameter $\sigma^2$. We use the same assumption here.

As mentioned earlier, in a clean setting where the predictor matrix $X$ is observed accurately, a Lasso estimate is obtained by minimizing (\ref{eq:lasso00}). When the dataset is corrupted by measurement errors, the observed matrix of predictors $Z$ is some function of the true covariance matrix $X$ and random errors. Based on $Z$ and $y$, estimates $\hSig$ and $\trho$ are constructed as surrogates to replace $\Sigma$ and $\rho$ respectively in (\ref{eq:lasso00}). Different pairs of unbiased estimates $(\hSig,\trho)$ are provided in \cite{loh12} for various types of measurement errors. We will present the actual form of $(\hSig,\trho)$ in section 4, but for now we only need to assume that $(\hSig,\trho)$  have been computed.

We now define a nearest positive semi-definite matrix projection operator as follows: for any square matrix $K$, 
\[
(K)_{+}=\arg\min_{K_1 \geq 0}  \|K-K_1 \|_{\max}.
\]
Then we denote $\tSig=(\hSig)_+$ and define our {\em Convex conditioned Lasso (CoCoLasso)} estimate as 
\begin{equation}\label{eq:canl}
\hat \beta = \underset{\beta}{\mbox{arg} \min} \; (1/2)\beta'\tSig\beta - \trho'\beta + \lam \|\beta\|_1
\end{equation}

We use an alternating direction method of multipliers (ADMM) \citep{boyd_admm} to obtain $\tSig$ from $\hSig$. The ADMM algorithm is very efficient and details of the algorithm are provided in Appendix \ref{app:admm}. By definition, $\tSig$ is always positive semi-definite. Note that $\Sigma$ is  positive semi-definite when $p>n$. Subsequently, we can reformulate our problem as:
\begin{equation}\label{eq:cocoaslasso}
\hat \beta = \underset{\beta}{\mbox{arg} \min} \; \frac 1n ||\ty-\tZ \beta||_2^2 + \lam \|\beta\|_1
\end{equation}
where $\tZ / \sqrt n $ is the Cholesky factor of $\tSig$ i.e. $\frac 1n \tZ'\tZ = \tSig$ and $\ty$ is such that $ \tZ'\ty = Z'y$.

Numerically, (\ref{eq:cocoaslasso}) is just like the clean Lasso. One can apply several very fast solvers to solve (\ref{eq:canl}), such as the coordinate descent algorithm \citep{glmnet} or the homotopy algorithm \citep{lars}. This is a great advantage for practitioners, as the Lasso solvers are widely used in practice and many know how to use them. We use the LARS-EN algorithm to obtain the solution as it simultaneously provides the entire solution path   for different values of $\lam$.

Theoretically, (\ref{eq:canl}) can be analyzed by the tools for analyzing the clean Lasso. The surrogate $\hSig$ chosen by \cite{loh12} is often an unbiased estimate of the true gram matrix $\Sigma$, achieving a desired rate of convergence under the max norm.
By definition, we have
\begin{equation}\label{Eq: trick}
\|\tSig-\Sigma \|_{\max} \leq \|\tSig-\hSig \|_{\max}+\|\hSig-\Sigma \|_{\max} \leq 2\|\hSig-\Sigma \|_{\max}
\end{equation}
Equation (\ref{Eq: trick}) ensures that $\tSig$ approximates $\Sigma$  as well as the initial surrogate $\hSig$. 

Compared with Loh and Wainwright's estimator in \cite{loh12},  CoCoLasso is guaranteed to be convex. This avoids the need of doing any non-convex analysis of the method. Furthermore, unlike \cite{loh12} our method does not require any knowledge of $\|\beta\|_1$ and thereby eliminates the need for an initial estimate to obtain a bound for $\|\beta\|_1$. In the next section, we show that CoCoLasso is sign consistent and has the $\ell_1,\ell_2$ error bounds comparable to that in \cite{loh12}.

\section{Theoretical Analysis}\label{sec:main} In this section we derive the $\ell_1$ and $\ell_2$ bounds for the statistical error of the CoCoLasso estimate as well as its support recovery probability bounds.

\subsection{$\ell_1$ and $\ell_2$ bounds for the statistical error}
We assume that $\hSig$ and $\trho$ are sufficiently `close' to $\Sigma$ and $\rho$ respectively in the following sense: 

\begin{defn} \label{def:close} Closeness condition: Let us assume that the distribution of  $\hSig$ and $\trho$ are identified by a set of parameters $\theta$. Then there exists universal constants $C$ and $c$, and positive functions $\zeta$ and $\eps_0$ depending on $\beta^*_S$, $\theta$ and $\sigs$ such that for every $\eps \leq \eps_0$, $\hSig$ and $\trho$ satisfy the following probability statements:
\begin{equation}\label{Eq: close}
\begin{array}{c}
Pr(|\hSig_{ij}-\Sigma_{ij} | \geq \eps) \leq C \exp \left(-cn\eps^2\zeta^{-1} \right) \; \forall \; i,j=1,\ldots,p \\
Pr(|\trho_j-\rho_j | \geq \eps) \leq C \exp \left(-cns^{-2}\eps^2\zeta^{-1} \right) \; \forall \; j=1,\ldots,p
\end{array}
\end{equation}
\end{defn}
The Closeness Condition requires that the surrogates $\hSig$ (and hence $\tSig$) and $\trho$ are close to $\Sigma$ and $\rho$ respectively in terms of the elementwise maximum norm. We show later in Section~\ref{sec:part} that this condition is satisfied by the surrogates defined in \cite{loh12} for commonly used additive or multiplicative measurement error models. 

We also assume the following compatibility or restricted eigenvalue condition:
\begin{equation}\label{eq:rec}
0 < \Omega = \quad \underset{x \neq 0, \, \|x_{S^c}\| _1 \leq 3 \|x_S\| _1}{min} \quad \frac {x'\Sigma x}{\|x\|_2^2}
\end{equation}
Restricted eigenvalue condition similar to this has been used in \cite{vdg09} to obtain bounds of statistical error of the clean Lasso estimate.

We now state the main result on the statistical error of the CoCoLasso estimate. All proofs are provided in Section \ref{sec:proofs}. Note that, for all the theoretical results, $C$ and $c$ denote generic positive constants. Their values vary from expression to expression but they remain universal constants.

\begin{thm}\label{th:error}
Under the assumptions stated in (\ref{Eq: close}) and (\ref{eq:rec}), for $\lam \leq min(\eps_0, 12\eps_0 \|\beta^*_S\linf )$ and $\eps \leq min(\eps_0, \Omega/64s)$ the following results holds true with probability at least $1-p^2C\exp\left(-cns^{-2}\lam^2\zeta^{-1} \right)-p^2C\exp\left(-cn\eps^2\zeta^{-1} \right)$:
\begin{align}\label{eq:errorbound}
\|\betah-\beta^*\|_2 \leq C \lam \sqrt s / \Omega \quad , \quad 
\|\betah-\beta^*\|_1 \leq C \lam s / \Omega
\end{align}
\end{thm}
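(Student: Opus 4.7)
The plan is to mimic the standard clean-Lasso proof template, exploiting the fact that $\tSig$ is positive semi-definite so that (\ref{eq:canl}) is a genuine convex quadratic program in $\beta$. The main work is to control the two discrepancies $\trho - \rho$ and $\tSig - \Sigma$, using the Closeness Condition (\ref{Eq: close}) combined with the key observation (\ref{Eq: trick}).

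First I would write down the basic inequality. Setting $\nu = \betah - \beta^*$ and using optimality of $\betah$ in (\ref{eq:canl}) yields
\begin{equation*}
\tfrac{1}{2}\nu' \tSig \nu \;\leq\; (\trho - \tSig\beta^*)' \nu + \lam(\|\beta^*\|_1 - \|\betah\|_1).
\end{equation*}
The key ingredient is an $\ell_\infty$ bound on the effective gradient $\trho - \tSig\beta^*$. I would decompose
\begin{equation*}
\trho - \tSig\beta^* \;=\; (\trho - \rho) \;+\; (\rho - \Sigma\beta^*) \;+\; (\Sigma - \tSig)\beta^*,
\end{equation*}
and control the three pieces separately. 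For the first, the closeness bound on $\trho$ applied at scale $\lam$ and union-bounded over $p$ coordinates yields the first exponential in the probability statement (the $s^{-2}$ in (\ref{Eq: close}) matches the $\lam^2 s^{-2}$ inside that exponential). For the second, $\rho - \Sigma\beta^* = \frac{1}{n}X'w$ is the familiar clean-Lasso noise term, sub-Gaussian with parameter $\sigs$, bounded in $\ell_\infty$ by standard sub-Gaussian tail arguments. For the third, sparsity of $\beta^*$ together with (\ref{Eq: trick}) gives $\|(\Sigma - \tSig)\beta^*\|_\infty \leq 2\|\hSig - \Sigma\|_{\max}\, s\|\beta^*_S\linf$, and the hypothesis on $\lam$ involving $\|\beta^*_S\linf$ together with the closeness for $\hSig$ at scale $\eps$ (union-bounded over $p^2$ entries) ensures that this is dominated by a fraction of $\lam$. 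Combining, on an event of the stated probability one obtains $\|\trho - \tSig\beta^*\|_\infty \leq \lam/2$.

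Once the gradient bound is in hand, the remainder is the standard cone-plus-restricted-eigenvalue argument. Plugging $\|\trho - \tSig\beta^*\|_\infty \leq \lam/2$ into the basic inequality, splitting $\nu = \nu_S + \nu_{S^c}$, and using $\nu'\tSig\nu \geq 0$ gives the cone inclusion $\|\nu_{S^c}\|_1 \leq 3\|\nu_S\|_1$. The only twist relative to the clean case is that (\ref{eq:rec}) is stated for $\Sigma$, not $\tSig$; I would transfer it via
\begin{equation*}
\nu'\tSig\nu \;\geq\; \nu'\Sigma\nu - \|\tSig - \Sigma\|_{\max}\|\nu\|_1^2 \;\geq\; \Omega\|\nu\|_2^2 - 32 s \eps \|\nu\|_2^2,
\end{equation*}
using the cone property $\|\nu\|_1 \leq 4\sqrt{s}\|\nu\|_2$ and (\ref{Eq: trick}). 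The hypothesis $\eps \leq \Omega/(64s)$ is precisely what is needed to conclude $\nu'\tSig\nu \geq (\Omega/2)\|\nu\|_2^2$. Combining with the basic inequality then gives $(\Omega/4)\|\nu\|_2^2 \leq (3\lam/2)\sqrt{s}\|\nu\|_2$, hence $\|\nu\|_2 \leq 6\lam\sqrt{s}/\Omega$, and the $\ell_1$ bound follows from $\|\nu\|_1 \leq 4\sqrt{s}\|\nu\|_2$.

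The main obstacle I expect is bookkeeping: tracking the three separate probabilistic events---closeness for $\trho$ at scale $\lam$, closeness for $\hSig$ at scale $\eps$, and the sub-Gaussian bound on $\frac{1}{n}X'w$---and lining up constants across them so that the final probability matches the form $1 - p^2 C \exp(-cn s^{-2}\lam^2\zeta^{-1}) - p^2 C \exp(-cn\eps^2\zeta^{-1})$ in the theorem. Conceptually, the only non-routine step is the transfer of the restricted-eigenvalue condition from $\Sigma$ to $\tSig$; this is what forces $\eps$ to scale like $1/s$ rather than being merely $o(1)$, and is the main quantitative content beyond the clean-Lasso proof.
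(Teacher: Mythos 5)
Your overall route is the same as the paper's: the basic inequality from optimality, the three-term decomposition of $\trho-\tSig\beta^*$, the cone inclusion from $\nu'\tSig\nu\ge 0$, and the transfer of the restricted eigenvalue condition at the cost of $|\nu'(\tSig-\Sigma)\nu|\le \|\tSig-\Sigma\|_{\max}\|\nu\|_1^2\le 32s\eps\|\nu\|_2^2$ under $\eps\le\Omega/(64s)$. Whether one phrases this last step as ``$\tSig$ satisfies an RE condition with constant $\Omega/2$ on the cone,'' as you do, or keeps the quadratic form in $\Sigma$ and bounds the remainder $|\nu'(\tSig-\Sigma)\nu|$, as the paper does, is purely cosmetic; the inequality and the threshold $\Omega/(64s)$ are identical, and your cleaner endgame $(\Omega/4)\|\nu\|_2^2\le(3\lam/2)\sqrt{s}\|\nu\|_2$ plus $\|\nu\|_1\le 4\sqrt{s}\|\nu\|_2$ gives the same bounds.

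There is, however, one step that fails as you state it: the control of the third gradient term $(\Sigma-\tSig)\beta^*$. You claim that ``closeness for $\hSig$ at scale $\eps$'' makes $2\|\hSig-\Sigma\|_{\max}\,s\,\|\beta^*_S\|_\infty$ a fraction of $\lam$. But the theorem imposes no relation between $\eps$ and $\lam$: on the event $\{\|\hSig-\Sigma\|_{\max}\le\eps\}$ you only get the bound $2\eps s B$ with $B=\|\beta^*_S\|_\infty$, and since $\eps$ may be as large as $\Omega/(64s)$ this bound is $\Omega B/32$, which need not be $O(\lam)$; the gradient bound $\|\trho-\tSig\beta^*\|_\infty\le\lam/2$, and hence the cone inclusion, would not follow. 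The correct move --- and the reason the hypothesis $\lam\le 12\eps_0\|\beta^*_S\|_\infty$ appears in the statement --- is to invoke the closeness condition for $\hSig$ at scale $\lam/(12sB)$ (legitimate because $\lam/(12sB)\le\eps_0$) through Lemma \ref{lem:trick}, giving $P\bigl(sB\|\tSig-\Sigma\|_{\max}>\lam/6\bigr)\le p^2C\exp\left(-cns^{-2}\lam^2B^{-2}\zeta^{-1}\right)$. This fourth event is missing from your bookkeeping list of three events; its probability folds into the \emph{first} exponential of the theorem (not the $\eps$-exponential) after redefining $\zeta$ as $\max(\zeta,B^2\zeta,\sigma^2)$, which is exactly how the paper makes the stated probability come out. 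With that repair, the rest of your argument goes through and yields the claimed $\ell_1$ and $\ell_2$ bounds.
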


Results similar to Theorem~\ref{th:error} were derived in Theorems 1 and 2 of \cite{loh12} for the estimates obtained by projected gradient descent algorithm for the non-convex objective function. Both the $\ell_1$ and $\ell_2$ bounds obtained in Theorem \ref{th:error}, are of the same order as the analogous bounds for statistical error of the traditional Lasso estimate. The tail probability depends on the presence of error in the variables through the component $\zeta$. Precise expression for $\zeta$ is derived for the case of additive  measurement error in Section \ref{sec:part}.

\subsection{Sign consistency}
In order to establish the sign consistency of CoCoLasso, in addition to the closeness conditions in (\ref{Eq: close}), we assume the irrepresentable and minimum eigenvalue conditions on $\Sigma$ which are sufficient and nearly necessary for sign consistency of the clean Lasso \citep{adalasso,zhaoyu,wainsgn}:
\begin{equation}\label{Eq: cond}
\| \Sigma_{S^c,S}\Sigma_{S,S}^{-1} \linf =1-\gamma <1 , \qquad  \Lambda_{\min}(\Sigma_{S,S}) = C_{\min} > 0 
\end{equation}
The main result on recovery of signed support is stated as follows:

\begin{thm}\label{Th: gen}
Under the assumptions given in Equations~(\ref{Eq: close}) and (\ref{Eq: cond}), for $\lam \leq min(\eps_0,4\eps_0/\gamma)$ and $\eps \leq \min(\eps_1,\lambda /(\lam \eps_2 +\eps_3))$ where $\eps_i$'s are bounded positive constants depending of $\Sigma_{S,S}$, $\beta^*_S$, $\theta$ and $\sigs$, the following occurs with probability at least $1-\delta_1$ where  $\delta_1 = p^2C\exp\left(-cns^{-2}\gamma^2\lam^2\zeta^{-1} \right)+p^2C\exp\left(-cns^{-2}\eps^2\zeta^{-1} \right)$

(a) There exists a unique solution $\betah$ minimizing (\ref{eq:canl}) whose support is a subset of the true support.

(b) $||\betah_S-\beta^*_S\linf \leq \kappa\lam $ where $\kappa =\left(4||\Sigma_{S,S}^{-1} \linf + C_{\min}^{-1/2} \right)$ 

(c) If $|\beta^*_{\min}| \geq  \kappa\lam $, then sign$(\betah_S)=$sign$(\beta^*_S)$
\end{thm}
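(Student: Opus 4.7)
The natural strategy is to apply Wainwright's \emph{primal-dual witness} (PDW) construction to the CoCoLasso KKT system. First I would establish that $\tSig_{S,S}$ is strictly positive definite, so that the restricted oracle problem
\[
\min_{\beta_S}\,\tfrac12 \beta_S'\tSig_{S,S}\beta_S - \trho_S'\beta_S + \lam\|\beta_S\|_1
\]
has a unique minimizer $\betah_S$; this follows from Weyl's inequality and $\|\tSig_{S,S} - \Sigma_{S,S}\|_{op} \leq s\|\tSig - \Sigma\|_{\max}$ once $\eps$ is small enough that $s\eps \leq C_{\min}/2$, giving $\Lambda_{\min}(\tSig_{S,S}) \geq C_{\min}/2$. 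I would then set $\betah_{S^c}=0$, extract a subgradient $\hat z_S$ from the restricted KKT relation $\tSig_{S,S}\betah_S - \trho_S + \lam \hat z_S = 0$, and define the candidate $S^c$-dual by $\hat z_{S^c} := \lam^{-1}(\trho_{S^c} - \tSig_{S^c,S}\betah_S)$. Setting $e := \trho - \tSig\beta^*$ and using $\beta^*_{S^c}=0$ yields the standard identity
\[
\hat z_{S^c} = \tSig_{S^c,S}\tSig_{S,S}^{-1}\hat z_S + \lam^{-1}\bigl(e_{S^c} - \tSig_{S^c,S}\tSig_{S,S}^{-1}e_S\bigr).
\]

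The core step is verifying strict dual feasibility $\|\hat z_{S^c}\|_\infty < 1$. For the first (deterministic) summand, I would show $\|\tSig_{S^c,S}\tSig_{S,S}^{-1}\|_\infty \leq 1 - \gamma/2$ by perturbing the irrepresentable condition from $\Sigma$ to $\tSig$: write $\tSig_{S,S}^{-1} = \Sigma_{S,S}^{-1} - \Sigma_{S,S}^{-1}(\tSig_{S,S}-\Sigma_{S,S})\tSig_{S,S}^{-1}$, expand $\tSig_{S^c,S}\tSig_{S,S}^{-1}$ around $\Sigma_{S^c,S}\Sigma_{S,S}^{-1}$, and bound the extra pieces by $s\|\tSig-\Sigma\|_{\max}\cdot\|\Sigma_{S,S}^{-1}\|_\infty$, which is absorbed by choosing $\eps_1$ small enough. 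For the second summand, decompose
\[
e = (\trho - \rho) + \tfrac{1}{n}X'w + (\Sigma - \tSig)\beta^*,
\]
control the first term by the closeness condition on $\trho$, bound $\tfrac{1}{n}X'w$ by standard sub-Gaussian concentration of order $\sqrt{\log p/n}$, and estimate $\|(\Sigma-\tSig)\beta^*\|_\infty \leq s\|\beta^*_S\|_\infty \|\tSig-\Sigma\|_{\max}$. These combine to a bound of the form $\|e\|_\infty \leq \eps_3\eps + \lam\eps_2\eps$, so the hypothesis $\eps \leq \lam/(\lam\eps_2+\eps_3)$ gives $\lam^{-1}\|e_{S^c} - \tSig_{S^c,S}\tSig_{S,S}^{-1}e_S\|_\infty \leq \gamma/2$ after absorbing constants.

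Granted strict dual feasibility and invertibility of $\tSig_{S,S}$, a standard Lasso optimality argument shows that $(\betah_S,0)$ is the unique minimizer of (\ref{eq:canl}) and is supported in $S$, yielding part~(a). For part~(b), I would write $\betah_S - \beta^*_S = \tSig_{S,S}^{-1}(e_S - \lam \hat z_S)$ and take $\ell_\infty$ norms: the $\lam\hat z_S$ contribution is at most $\|\tSig_{S,S}^{-1}\|_\infty \lam \leq 4\|\Sigma_{S,S}^{-1}\|_\infty \lam$ (the factor $4$ absorbing the $\ell_\infty$-norm perturbation via a Neumann-style expansion), while $\|\tSig_{S,S}^{-1}e_S\|_\infty \leq \|\tSig_{S,S}^{-1}e_S\|_2$ is controlled by sub-Gaussian concentration of $\tSig_{S,S}^{-1}(X_S'w/n)$, whose covariance is asymptotically $(\sigs/n)\Sigma_{S,S}^{-1}$ with operator norm at most $C_{\min}^{-1}$, producing the $C_{\min}^{-1/2}\lam$ term in $\kappa$. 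Part~(c) is then immediate: for every $j \in S$, $|\betah_j - \beta^*_j| \leq \kappa\lam \leq |\beta^*_j|$ forces $\mathrm{sign}(\betah_j)=\mathrm{sign}(\beta^*_j)$. The main obstacle is the perturbation accounting in paragraph two: the closeness condition only delivers max-norm control of $\tSig - \Sigma$, whereas the irrepresentable condition is genuinely $\ell_\infty$-operator-norm in nature, so the conversion costs a factor of $s$ --- this is the source of the $s^{-2}$ factor appearing in the tail $\delta_1$.
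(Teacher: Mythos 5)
Your strategy coincides with the paper's own proof: a primal--dual witness construction, positive definiteness of $\tSig_{S,S}$ via eigenvalue perturbation with the max-norm-to-operator-norm conversion (the factor $s$ you correctly identify as the source of $s^{-2}$ in $\delta_1$), and strict dual feasibility obtained by perturbing the irrepresentable condition from $\Sigma$ to $\tSig$ while splitting the effective noise $e=\trho-\tSig\beta^*$ into a closeness term, the sub-Gaussian term $\frac 1n X'w$, and the perturbation term $(\Sigma-\tSig)\beta^*$. Your part (a) is a sound reorganization of exactly what the paper does --- the paper groups the same quantities through $G=\Sigma_{S^c,S}\Sigma_{S,S}^{-1}$, $H=\tSig_{S^c,S}\tSig_{S,S}^{-1}-G$ and $F=\tSig_{S,S}^{-1}-\Sigma_{S,S}^{-1}$ --- and part (c) is immediate in both treatments.

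The genuine gap is in part (b), in two places. First, the step ``$\|\tSig_{S,S}^{-1}e_S\|_\infty\le\|\tSig_{S,S}^{-1}e_S\|_2$, controlled because the covariance of $\Sigma_{S,S}^{-1}X_S'w/n$ has operator norm at most $\sigs/(nC_{\min})$'' fails: the $\ell_2$ norm of an $s$-dimensional random vector concentrates around the square root of the \emph{trace} of its covariance, not of its operator norm, so this route gives a bound of order $\sqrt{s\,\sigs/(nC_{\min})}$ --- a loss of $\sqrt s$ --- and does not produce the $C_{\min}^{-1/2}\lam$ term in $\kappa$ at the stated probability level. The correct argument (and the paper's) is coordinatewise: each coordinate $e_j'\Sigma_{S,S}^{-1}X_S'w/n$ is sub-Gaussian with parameter at most $\sigs\, e_j'\Sigma_{S,S}^{-1}e_j/n\le \sigs/(nC_{\min})$, and a union bound over the $s$ coordinates yields $\|\frac 1n\Sigma_{S,S}^{-1}X_S'w\|_\infty\le\lam/\sqrt{C_{\min}}$ with the required probability. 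Second, you silently identify $e_S$ with $\frac 1n X_S'w$: by your own decomposition, $e_S$ also contains $(\trho_S-\rho_S)$ and $(\Sigma_{S,S}-\tSig_{S,S})\beta^*_S$, and these measurement-error terms are not sub-Gaussian-concentration terms; they must be handled by the closeness condition and by the max-norm bound on $D=\tSig-\Sigma$ (again paying a factor $s$ and invoking the hypothesis on $\eps$). They are precisely what generates part of the constant $4\|\Sigma_{S,S}^{-1}\|_\infty$ in $\kappa$; your allocation, which charges all of $4\|\Sigma_{S,S}^{-1}\|_\infty$ to the subgradient term alone, leaves no room for them, so the stated $\kappa$ does not come out of your bookkeeping. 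The paper makes the accounting explicit by writing, with $\phi=\|\Sigma_{S,S}^{-1}\|_\infty$, $\betah_S-\beta^*_S = F_{S,S}(\trho_S-\rho_S+\frac 1n X_S'X_S\beta^*_S+\frac 1n X_S'w)+\Sigma_{S,S}^{-1}(\trho_S-\rho_S)+\frac 1n\Sigma_{S,S}^{-1}X_S'w-\lam\tSig_{S,S}^{-1}\tu_S$, and bounding the four pieces by $\lam\phi$, $\lam\phi$, $\lam/\sqrt{C_{\min}}$ and $2\lam\phi$ respectively.
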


If we assume for simplicity that $\kappa$ is $\calO(1)$ and the triplet $\{n,p,s\}$ and $\beta^*$ satisfy the scaling:
\begin{equation}\label{Eq: scale}
\begin{array}{c}
s^2\log p /n \rightarrow 0 \mbox{ as } n,p \rightarrow \infty \\
|\beta^*_{\min}| \gg s(\zeta \log p /n)^{1/2}
\end{array}
\end{equation}
then from the expression of $\delta_1$ in Theorem~\ref{Th: gen}  we can choose $\lambda$ so that $1- \delta_1$ goes to one, which implies the sign-consistency of the CoCoLasso estimate.

\begin{corollary}\label{Th: asym}
If $\Sigma$, $\tSig$ and $\trho$ satisfy the regularity conditions given in Theorem \ref{Th: gen}, then under the scaling in Equation (\ref{Eq: scale}), the CoCoLasso estimate $\betah$ defined in (\ref{eq:canl}) is sign-consistent if $|\beta^*_{\min}| \gg \lam \gg  s(\zeta \log p/n)^{1/2}$ and we also have the $\ell_{\infty}$ error bound $Pr(||\betah_S-\beta^*_S \linf \leq \kappa\lam) \rightarrow 1$.
\end{corollary}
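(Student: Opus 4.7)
The plan is to deduce the corollary as a direct asymptotic consequence of Theorem~\ref{Th: gen}, by choosing $\lambda = \lambda_n$ and the auxiliary slack $\eps = \eps_n$ in a way that (i) all the deterministic hypotheses of Theorem~\ref{Th: gen} are satisfied for $n$ large enough, and (ii) the failure probability $\delta_1$ tends to $0$. The scaling assumption $s^2 \log p / n \to 0$, together with the gap $|\beta^*_{\min}| \gg s(\zeta \log p/n)^{1/2}$, is precisely what makes this choice possible.

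First, I would fix any $\lambda_n$ with $|\beta^*_{\min}| \gg \lambda_n \gg s(\zeta \log p/n)^{1/2}$, which exists by hypothesis. Under the scaling, the lower bound forces $ns^{-2}\lambda_n^2\zeta^{-1} \gg \log p$, so eventually $\lambda_n$ is as small as we like and in particular $\lambda_n \leq \min(\eps_0, 4\eps_0/\gamma)$. I would then set $\eps_n = c \lambda_n$ for a small enough positive constant $c$; since the constraint in Theorem~\ref{Th: gen} is $\eps \leq \min(\eps_1, \lambda/(\lambda\eps_2 + \eps_3))$ and the right hand side behaves like $\lambda_n/\eps_3$ for small $\lambda_n$, such a constant $c$ can be chosen (depending only on $\eps_1, \eps_2, \eps_3$) so that the condition holds for all large $n$. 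With this choice $\eps_n$ is of the same order as $\lambda_n$, hence also satisfies $\eps_n \gg s(\zeta \log p/n)^{1/2}$.

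Next, using $\kappa = \calO(1)$ together with $|\beta^*_{\min}| \gg \lambda_n$, the minimum-signal condition $|\beta^*_{\min}| \geq \kappa\lambda_n$ of part (c) of Theorem~\ref{Th: gen} holds for all sufficiently large $n$. It remains to show $\delta_1 \to 0$. Writing each term of $\delta_1$ as $p^2 C \exp(-c n s^{-2}\zeta^{-1} \lambda_n^2 \gamma^2)$ and $p^2 C \exp(-cns^{-2}\zeta^{-1}\eps_n^2)$, and using $p^2 = \exp(2\log p)$, both exponents take the form $2 \log p - c \cdot \big(n s^{-2}\zeta^{-1}\lambda_n^2\big)$. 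Since $\lambda_n$ (and analogously $\eps_n$) satisfies $n s^{-2}\zeta^{-1}\lambda_n^2 \gg \log p$, each term decays to zero, so $\delta_1 \to 0$.

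Assembling the pieces: on the event of probability $1 - \delta_1 \to 1$, Theorem~\ref{Th: gen}(a) gives uniqueness of $\betah$ with support contained in $S$, part (b) yields $\|\betah_S - \beta^*_S\linf \leq \kappa \lambda_n$, and part (c) combined with the minimum-signal condition gives $\mathrm{sign}(\betah_S) = \mathrm{sign}(\beta^*_S)$. Together these establish sign consistency and the asserted $\ell_\infty$ bound. The only step requiring genuine care is the compatibility of the upper and lower constraints on $\eps_n$ as $\lambda_n \to 0$; this is handled by tying $\eps_n$ to $\lambda_n$ through a small multiplicative constant, which I expect to be the only nontrivial bookkeeping in an otherwise straightforward asymptotic reduction.
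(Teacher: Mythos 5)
Your proposal is correct and takes essentially the same approach as the paper: the paper gives no standalone proof of this corollary, justifying it only via the remark preceding it, namely that one applies Theorem~\ref{Th: gen} with $\lam$ (and implicitly $\eps$) chosen in the admissible range and observes that $\lam \gg s(\zeta\log p/n)^{1/2}$ is exactly the condition $ns^{-2}\zeta^{-1}\lam^2 \gg \log p$ that drives $\delta_1 \to 0$. Your extra bookkeeping (taking $\eps_n = c\lam_n$, checking the constant upper bounds on $\lam_n$ and $\eps_n$ for large $n$, and invoking $\kappa = \calO(1)$ for the minimum-signal condition) simply fills in details the paper leaves implicit.
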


So far in this section we have derived a general theory for the CoCoLasso where there is no assumption on the type of measurement error and the form of the estimates $\hSig$ and $\trho$. The only condition that requires a careful check is that the estimates $\hSig$ and $\trho$ are close enough to $\Sigma$ and $\rho$ respectively in the sense defined in (\ref{Eq: close}). In the next section, we consider two specific types of error-in-variables models and use the results of this section to derive the theoretical properties of CoCoLasso estimates for those models.

\section{CoCoLasso under Two Types of Measurement Errors}\label{sec:part}

\subsection{Additive error}\label{sec:add} We assume that the entries of the observed design matrix $Z$ is contaminated by additive measurement error i.e. $z_{ij}=x_{ij}+a_{ij}$ or in matrix notation, $Z=X+A$ where $A=((a_{ij}))$ is the matrix of measurement errors. We also assume that the rows of $A$ are independent and identically distributed with $0$ mean, finite covariance $\Siga$ and sub-Gaussian parameter $\taus$. Following \citep{loh12} we assume that $\Sigma_A$ is known. The unbiased estimates of $\Sigma$ and $\rho$ are given by $\hSig_{add}=\frac 1n Z'Z - \Sigma_A$ and $\trho_{add}=\frac 1n Z'y$, respectively. It is easy to observe that $\hSig_{add}$ can have negative eigenvalues precluding convex optimization. CoCoLasso estimates for this model will be based on the modified objective function 
$$\tilde f_{add}(\beta)=(1/2)\beta'\tSig_{add}\beta - \trho_{add}'\beta + \lam|\beta\|_1 \mbox{ where } \tSig_{add}=(\hSig_{add})_+.$$ 

The following results show that $\hSig_{add}$ and $\trho_{add}$ satisfy the conditions in Equation (\ref{Eq: close}). 
\begin{lemma}\label{Lem: addin}
$\hSig_{add}$ and $\trho_{add}$ satisfy the closeness conditions in (\ref{Eq: close}) with $\zeta=max(\taus,\taus\|\beta^*_S\|_ \infty ^2,\tau^4,\sigma^4)$ and $\eps_0=c\tau^2$.
\end{lemma}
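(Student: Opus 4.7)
The plan is to verify the two tail bounds in the closeness condition by expanding $(\hSig_{add})_{ij}-\Sigma_{ij}$ and $(\trho_{add})_j-\rho_j$ and applying standard sub-Gaussian and sub-exponential concentration to each resulting piece. Throughout I will use that the columns of $X$ are normalized ($\frac{1}{n}\sum_k x_{kj}^2=1$) and that the rows of $A$ are i.i.d.\ with mean $0$ and sub-Gaussian parameter $\tau^2$, independently of $w$, whose components are i.i.d.\ mean-zero sub-Gaussian with parameter $\sigma^2$.

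For the first inequality in (\ref{Eq: close}), I would write
\[
(\hSig_{add})_{ij}-\Sigma_{ij}=\frac{1}{n}\sum_k x_{ki}a_{kj}+\frac{1}{n}\sum_k x_{kj}a_{ki}+\frac{1}{n}\sum_k\bigl(a_{ki}a_{kj}-E[a_{ki}a_{kj}]\bigr).
\]
The two linear terms are, conditional on $X$, weighted sums of independent sub-Gaussians and hence sub-Gaussian with parameter $\tau^2/n$, giving a Hoeffding tail of the form $2\exp(-cn\eps^2/\tau^2)$. The quadratic term is a centered sum of products of independent sub-Gaussians; by a Bernstein-type inequality for sub-exponential variables (or by Hanson--Wright applied rowwise), it satisfies $\Pr(|\cdot|\ge t)\le 2\exp(-cn\min(t^2/\tau^4,t/\tau^2))$, whose exponent is dominated by $cnt^2/\tau^4$ as long as $t\le\tau^2$. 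Restricting to $\eps\le c\tau^2=\eps_0$ and taking a union bound over the three terms yields a rate $\zeta\ge\max(\tau^2,\tau^4)$, as required.

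For the second inequality, I expand $y=X_S\beta^*_S+w$ and subtract $\rho_j=\frac{1}{n}\sum_k x_{kj}y_k$ to obtain
\[
(\trho_{add})_j-\rho_j=\frac{1}{n}\sum_k a_{kj}(X_S\beta^*_S)_k+\frac{1}{n}\sum_k a_{kj}w_k.
\]
The first piece is, conditional on $X$, sub-Gaussian with parameter $\tau^2\|X_S\beta^*_S\|_2^2/n^2$. Using Cauchy--Schwarz and column normalization, $\|X_S\beta^*_S\|_2^2\le s\sum_{l\in S}(\beta_l^*)^2\sum_k x_{kl}^2\le s^2n\|\beta^*_S\|_\infty^2$, which produces a tail $2\exp(-cn\eps^2/(s^2\tau^2\|\beta^*_S\|_\infty^2))$ and explains the $s^{-2}$ factor in the exponent. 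The second piece is a sum of independent products of independent sub-Gaussians; the same Bernstein-type bound used above (with parameters $\tau\sigma$, $\tau\sigma$ in place of $\tau^2,\tau^2$) yields, for $\eps\le c\tau^2$, a tail of the form $2\exp(-cn\eps^2/\max(\tau^4,\sigma^4))$ after observing that $\tau^2\sigma^2\le\max(\tau^4,\sigma^4)$ by AM-GM, and that in the regime where $\tau^2\ge\tau\sigma$ the linear part of the sub-exponential exponent $cn\eps/(\tau\sigma)$ still dominates $cn\eps^2/\tau^4$ for $\eps\le\tau^2$. Combining the two terms by union bound gives the stated rate with $\zeta=\max(\tau^2,\tau^2\|\beta^*_S\|_\infty^2,\tau^4,\sigma^4)$.

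The main obstacles are bookkeeping rather than any deep estimate: I need to make sure (i) the truncation level $\eps_0=c\tau^2$ keeps every sub-exponential tail in its sub-Gaussian regime so that each term contributes an $\exp(-cn\eps^2/\zeta)$ (not an $\exp(-cn\eps/\sqrt\zeta)$) bound, and (ii) the bound $\|X_S\beta^*_S\|_2^2\le s^2n\|\beta^*_S\|_\infty^2$ is sharp enough to yield exactly the $s^{-2}$ factor demanded by (\ref{Eq: close}) and no worse. Once those two points are handled, everything else is a routine application of Hoeffding and Bernstein inequalities.
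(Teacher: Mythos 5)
Your proposal is correct and follows essentially the same route as the paper: the same decompositions of $\hSig_{add,jk}-\Sigma_{jk}$ and $\trho_{add,j}-\rho_j$, sub-Gaussian (Hoeffding-type) tails for the linear terms, sub-exponential (Bernstein-type) tails for the product terms, and the restriction $\eps \leq c\taus$ to keep every sub-exponential bound in its sub-Gaussian regime. The only cosmetic differences are that the paper routes the product terms through its Lemma~\ref{Lem: sg} (a polarization-identity argument, which also covers possible within-row correlation of the errors, so you should say ``possibly dependent'' rather than ``independent'' factors in the quadratic term), and that it handles $\frac 1n a_j'X_S\beta^*_S$ by a union bound over the $s$ coordinates instead of your Cauchy--Schwarz bound on $\|X_S\beta^*_S\|_2$; both yield the same $s^{-2}$ factor and the same $\zeta$.
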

So, even though $\hSig_{add}$ may not be positive definite, the surrogates $\hSig_{add}$ and $\trho$ satisfy (\ref{Eq: close}). The following result is an immediate consequence:

\begin{corollary}\label{cor:add} The results of Theorems \ref{th:error} and \ref{Th: gen} (and Corollary \ref{Th: asym}) hold for the CoCoLasso estimate for the additive error model under the assumptions (\ref{eq:rec}) and (\ref{Eq: cond}) (and (\ref{Eq: scale})) respectively.
\end{corollary}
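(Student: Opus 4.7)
The plan is that this corollary is essentially a one-line consequence of the general theory developed in Section \ref{sec:main} together with Lemma \ref{Lem: addin}. The general results Theorem \ref{th:error}, Theorem \ref{Th: gen}, and Corollary \ref{Th: asym} are stated for any pair of surrogates $(\hSig,\trho)$ that satisfy the Closeness Condition (\ref{Eq: close}) together with the structural conditions on $\Sigma$ itself. Since (\ref{eq:rec}), (\ref{Eq: cond}), and (\ref{Eq: scale}) are assumed as hypotheses of the corollary, the only thing left to verify is that the additive-error surrogates $\hSig_{add} = \frac{1}{n} Z'Z - \Sigma_A$ and $\trho_{add} = \frac{1}{n} Z'y$ fit into the Closeness framework.

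First I would invoke Lemma \ref{Lem: addin} directly: it certifies that $(\hSig_{add}, \trho_{add})$ satisfies (\ref{Eq: close}) with the explicit values $\zeta = \max(\tau^2, \tau^2 \|\beta^*_S\|_\infty^2, \tau^4, \sigma^4)$ and $\eps_0 = c\tau^2$. In particular the parameters $\theta$ governing the distribution of the surrogates are $(\Sigma_A, \tau^2, \sigma^2)$, and the functions $\zeta$ and $\eps_0$ are bounded positive constants in the asymptotic regime of interest.

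Second I would combine this verification with each of the general results in turn. Applying Theorem \ref{th:error} with the Closeness Condition supplied by Lemma \ref{Lem: addin} yields the $\ell_1$ and $\ell_2$ error bounds in (\ref{eq:errorbound}) for $\betah$ computed from $\tilde f_{add}$; applying Theorem \ref{Th: gen} in the same way yields uniqueness of the minimizer, support inclusion, the $\ell_\infty$ bound, and sign consistency provided $|\beta^*_{\min}|$ is sufficiently large; finally, plugging the explicit $\zeta$ from Lemma \ref{Lem: addin} into Corollary \ref{Th: asym} produces the asymptotic sign-consistency statement under the scaling (\ref{Eq: scale}).

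There is really no obstacle here beyond bookkeeping, since all the substantive work sits in Lemma \ref{Lem: addin} (which controls the tails of $\frac{1}{n} Z'Z - \Sigma$ and $\frac{1}{n} Z'y - \rho$ using sub-Gaussian concentration for products of sub-Gaussian variables) and in the three general results from Section \ref{sec:main}. The mild point to be careful about is that the bound $\eps \leq \eps_0 = c\tau^2$ and the thresholds for $\lambda$ in Theorems \ref{th:error} and \ref{Th: gen} remain compatible with the scaling (\ref{Eq: scale}); this is immediate because $\tau^2, \sigma^2$, and $\|\beta^*_S\|_\infty$ are treated as constants, so $\zeta$ and $\eps_0$ are $\calO(1)$ and the resulting range of admissible $\lambda$ is nonempty asymptotically, allowing us to choose $\lambda$ in the window $|\beta^*_{\min}| \gg \lambda \gg s(\zeta \log p / n)^{1/2}$ required by Corollary \ref{Th: asym}.
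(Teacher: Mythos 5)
Your proposal is correct and matches the paper's own treatment exactly: the paper also regards this corollary as an immediate consequence of Lemma \ref{Lem: addin} (which certifies the closeness condition (\ref{Eq: close}) for $\hSig_{add}$ and $\trho_{add}$), after which Theorems \ref{th:error} and \ref{Th: gen} and Corollary \ref{Th: asym} apply verbatim under the assumed conditions (\ref{eq:rec}), (\ref{Eq: cond}), and (\ref{Eq: scale}). Your added remark about the compatibility of the admissible range of $\lambda$ with the scaling (\ref{Eq: scale}) is a reasonable piece of bookkeeping that the paper leaves implicit.
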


As $\zeta$ increases with $\tau$ we see that the lower bound for $\lambda$ required in the Corollary increases as $\tau$ increases implying that more penalization is required in presence of larger measurement error to accurately recover the sparse support.

Note that the additive error covariance $\Sigma_A$ is assumed to be known in order to compute the CoCoLasso estimate. Similar assumption was used in \cite{loh12} and \cite{rose13} as it is unclear how to obtain a data-driven estimate of $\Sigma_A$ when only one dataset is available. If however, multiple replicates of the data are available,  following \cite{loh12}, one can obtain a data-driven estimate $\hSig_A$ of $\Sigma_A$ and define $\hSig_{add}=\frac 1n Z'Z - \hSig_A$. 

\subsection{Multiplictive error and missing data}\label{sec:mult}
If we assume that the errors are multiplicative, we observe $z_{ij}=x_{ij}m_{ij}$. In matrix notation, we have $Z=X\odot M$ where $M=((m_{ij}))$ and $\odot$ denotes the elementwise multiplication operator for vectors and matrices. We assume that the rows of $M$ are independent and identically distributed with mean $\mu_M$, covariance $\Sigma_M$ and sub-Gaussian parameter $\tau^2$. Under the assumption that the entries of $\mu_M$ and $\Sigma_M+\mu_M\mu_M'$ are strictly positive, \cite{loh12} suggests using the unbiased surrogates $\hSig_{mult}=(1/n) ZZ' \oslash (\Sigma_M+\mu_M\mu_M')$ and $\trho_{mult} = (1/n) Z'y \oslash \mu_M$ where $\oslash$ denotes the elementwise division operator for vectors and matrices. $\hSig_{mult}$ once again may not be positive semi-definite. The CoCoLasso estimate $\betah$ is obtained as
$$\min_{\beta} (1/2)\beta' (\tSig_{mult})_+\beta - \trho_{mult}'\beta + \lam|\beta\|_1 \mbox{ where } \tSig_{mult}=(\hSig_{mult})_+.$$

Randomly missing covariates can be formulated as a multiplicative error model. For example, a simple model assumes that $x_{ij}$'s are missing randomly with probability $r$ and their missing statuses are independent of one another. Then we can defining $z_{ij}=x_{ij}m_{ij}$ where $m_{ij}=I(x_{ij} \mbox{ is not missing }) \sim Bernoulli(1-r)$. Other missing data models with different choices of the missing probabilities (e.g. $m_{ij} \sim 
Bernoulli(1-r_j)$) will also fall under the same setup. We can obtain estimate of $r$ (or $r_j$) as the proportion of missing entries in the matrix (or in the $j^{th}$ column). For simplicity, we can assume $r$ is known and then $\Sigma_M$ and $\mu_M$ are known as well. 

We now establish analogous results for the CoCoLasso estimate in this multiplicative model setup. Note that as the errors are multiplicative, in order to have all the $z_{ij}$'s to be close to the respective $x_{ij}$'s, we need an upper bound for both $x_{ij}$ and $m_{ij}$. We also need a positive lower bound for the entries of $\mu_M$ and $\Sigma_M+\mu_M\mu_M'$ for the expressions of $\hSig_{mult}$ and $\trho_{mult}$ to be meaningful. To ensure these, we impose the following additional set of regularity conditions for the multiplicative setup:
\begin{equation}\label{Eq: multcond}
\begin{array}{cc}
\underset{i,j}{\max} \; |X_{ij}| = X_{\max} < \infty, \qquad & \underset{i,j}{\min} \;  E(m_1m_1') = M_{\min} > 0 \\
\min \; \mu_M=\mu_{\min} > 0, \qquad & \max\;  \mu_M = \mu_{\max} < \infty  \end{array}
\end{equation}

Under these regularity conditions the following lemma shows that $\tSig_{mult}$ and $\trho_{mult}$ satisfies the conditions in (\ref{Eq: close}):
\begin{lemma}\label{Lem: multin}
There exists positive functions $\eps_0$ and $\zeta$ depending on $\beta^*_S$, $\taus$, $\sigs$ and the constants in (\ref{Eq: multcond})  such that $\hSig_{mult}$ and $\trho_{mult}$ satisfy the closeness conditions in (\ref{Eq: close}).
\end{lemma}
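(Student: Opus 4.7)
\textbf{Proof proposal for Lemma~\ref{Lem: multin}.}

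The plan is to show, elementwise, that $\hSig_{mult}$ concentrates around $\Sigma$ and $\trho_{mult}$ concentrates around $\rho$ by reducing each entry to an average of independent sub-exponential random variables and then applying a Bernstein-type tail bound. First I would verify the unbiasedness structure: since $z_{ki}=x_{ki}m_{ki}$ and the rows of $M$ are i.i.d.\ with $E[m_1 m_1']=\Sigma_M+\mu_M\mu_M'$, the $(i,j)$ entry of $(1/n)Z'Z$ has mean $\Sigma_{ij}(\Sigma_M+\mu_M\mu_M')_{ij}$, so division (elementwise) by $(\Sigma_M+\mu_M\mu_M')_{ij}\geq M_{\min}>0$ produces an unbiased estimator of $\Sigma_{ij}$. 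Likewise, since $y$ and $M$ are independent and $E[m_{kj}]=\mu_{M,j}\geq \mu_{\min}>0$, the $j$th entry of $(1/n)Z'y \oslash \mu_M$ has mean $\rho_j$.

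Next I would derive the entrywise concentration for $\hSig_{mult}$. Write
\[
(\hSig_{mult})_{ij}-\Sigma_{ij}
=\frac{1}{n(\Sigma_M+\mu_M\mu_M')_{ij}}\sum_{k=1}^{n} x_{ki}x_{kj}\bigl(m_{ki}m_{kj}-E[m_{ki}m_{kj}]\bigr).
\]
The factor $x_{ki}x_{kj}$ is bounded by $X_{\max}^2$ by (\ref{Eq: multcond}), while $m_{ki}$ and $m_{kj}$ are sub-Gaussian with parameter $\tau^2$, so $m_{ki}m_{kj}$ is sub-exponential with a norm $K_1$ depending only on $\tau$ and $\mu_{\max}$. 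Multiplying by the bounded factor $X_{\max}^2$ and dividing by $M_{\min}$ preserves the sub-exponential property. A standard Bernstein inequality for centered i.i.d.\ sub-exponential variables yields, for $\eps$ below some absolute constant times $K_1 X_{\max}^2/M_{\min}$,
\[
\Pr\bigl(|(\hSig_{mult})_{ij}-\Sigma_{ij}|\geq \eps\bigr)\leq C\exp\!\bigl(-c n \eps^2 \zeta_1^{-1}\bigr),
\]
with $\zeta_1$ a polynomial in $X_{\max}$, $\tau$, $\mu_{\max}$ and $1/M_{\min}$ that does \emph{not} involve $s$.

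Finally I would handle $\trho_{mult}$. Using $y_k=x_k'\beta^*+w_k$ and independence of $w$ and $M$,
\[
(\trho_{mult})_j-\rho_j=\frac{1}{n\mu_{M,j}}\sum_{k=1}^n x_{kj}\bigl(x_k'\beta^*_S+w_k\bigr)\bigl(m_{kj}-\mu_{M,j}\bigr).
\]
Each summand is a product of a bounded deterministic factor $x_{kj}(x_k'\beta^*_S)$ of magnitude at most $X_{\max}^2 s \|\beta^*_S\|_\infty$ (via H\"older and the sparsity of $\beta^*$) times the centered sub-Gaussian $m_{kj}-\mu_{M,j}$, plus the product of two independent sub-Gaussians $w_k$ and $m_{kj}-\mu_{M,j}$ (scaled by $x_{kj}$). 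Both pieces are mean-zero sub-exponentials; their norms are controlled by $K_2 = C\mu_{\min}^{-1}\max\{X_{\max}^2 s\|\beta^*_S\|_\infty \tau,\, X_{\max}\sigma\tau\}$. Another application of Bernstein's inequality gives
\[
\Pr\bigl(|(\trho_{mult})_j-\rho_j|\geq \eps\bigr)\leq C\exp\!\bigl(-c n s^{-2}\eps^2 \zeta_2^{-1}\bigr),
\]
where the $s^{-2}$ arises from squaring the $s$ inside $K_2$. Taking $\zeta$ as the maximum of $\zeta_1$ and the $s$-free part of $\zeta_2$, and $\eps_0$ as the smaller of the two validity thresholds, establishes (\ref{Eq: close}).

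The main obstacle is bookkeeping rather than depth: I need to (i) verify that products of sub-Gaussians and bounded variables are genuinely sub-exponential with the right norm (so that Bernstein applies with the advertised $\eps_0$ cutoff), and (ii) track precisely how the deterministic part $x_k'\beta^*_S$ of $y_k$ injects a factor of $s$ into the sub-exponential scale of $\trho_{mult}$---this is exactly the source of the $s^{-2}$ in the second line of (\ref{Eq: close}). Both obstacles are purely computational and can be dispatched by the standard Orlicz-norm calculus.
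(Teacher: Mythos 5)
Your proposal is correct, and it reaches the conclusion by a route that differs from the paper's in both halves of the argument, though the overall skeleton (entrywise concentration, sub-Gaussian/sub-exponential tail bounds, the factor $s^{-2}$ arising from the signal part of $y$) is the same. For $\hSig_{mult}$, the paper first centers the multiplicative errors, splitting $m_{ij}m_{ik}-\mu_j\mu_k-\sigma_{m,jk}$ into the centered product $(m_{ij}-\mu_j)(m_{ik}-\mu_k)-\sigma_{m,jk}$ plus two linear terms; the product term is handled by its Lemma~\ref{Lem: sg}, which goes through the polarization identity $4xy=(x+y)^2-(x-y)^2$ and the square-of-sub-Gaussian-is-sub-exponential fact, while the linear terms use the Hoeffding-type bound (\ref{eq:hoeff}). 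You instead treat the uncentered product $m_{ki}m_{kj}-E[m_{ki}m_{kj}]$ as a single centered sub-exponential variable via the Orlicz product rule $\|XY\|_{\psi_1}\leq\|X\|_{\psi_2}\|Y\|_{\psi_2}$ (with the means absorbed into the sub-Gaussian norms, whence the $\mu_{\max}$ dependence in your $K_1$), and apply Bernstein once; this buys a shorter argument with fewer terms, at the cost of invoking a product inequality that the paper's appendix deliberately avoids by using polarization. For $\trho_{mult}$, the paper expands $X_S\beta^*_S$ coordinate-wise and takes a union bound over the $s$ summands, each required to be below $\eps/(2s)$ (which silently costs an extra factor $s$ in front of the exponential, later absorbed into $p^2$); you instead keep $x_k'\beta^*_S$ as a single deterministic weight bounded by $sBX_{\max}$ via H\"older, so the factor $s$ enters the sub-exponential scale $K_2$ and the $s^{-2}$ appears directly upon squaring --- a cleaner accounting that avoids the union bound entirely. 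Two cosmetic points: your summands are independent but not identically distributed (the fixed-design weights $x_{ki}x_{kj}$ vary with $k$), so you need the weighted Bernstein inequality rather than the i.i.d.\ version you cite --- exactly the form the paper's Lemma~\ref{Lem: sg} states with its weights $a_i$; and your resulting $\zeta$ contains cross terms like $\tau^2\mu_{\max}^2$ that the paper's sharper split avoids, but since Definition~\ref{def:close} only requires existence of \emph{some} admissible $(\eps_0,\zeta)$, this looseness is immaterial to the lemma.
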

Having proved Lemma \ref{Lem: multin}, once again we use Theorems \ref{th:error}, \ref{Th: gen} and Corollary \ref{Th: asym} to have the following results:
\begin{corollary}\label{cor:mult} The results of Theorems \ref{th:error} and \ref{Th: gen} (and Corollary \ref{Th: asym}) hold for the CoCoLasso estimate for the multiplicative error/missing data model under the assumptions (\ref{eq:rec}) and (\ref{Eq: cond}) (and (\ref{Eq: scale})) respectively.
\end{corollary}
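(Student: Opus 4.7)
The proof will be essentially a direct combination of Lemma~\ref{Lem: multin} with the abstract results already established in Section~\ref{sec:main}. Recall that Theorems~\ref{th:error} and \ref{Th: gen} and Corollary~\ref{Th: asym} are stated for \emph{any} surrogate pair $(\hSig,\trho)$ that satisfies the Closeness Condition (\ref{Eq: close}), together with the design-side conditions (\ref{eq:rec}) for the $\ell_1/\ell_2$ bounds and (\ref{Eq: cond}) for sign consistency. Neither theorem uses anything specific about the source of the corruption beyond (\ref{Eq: close}). So once the multiplicative surrogates have been shown to satisfy (\ref{Eq: close}), the corollary is immediate.

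Accordingly the plan is as follows. First I would invoke Lemma~\ref{Lem: multin} to produce positive functions $\eps_0$ and $\zeta$ (depending on $\beta^*_S$, $\taus$, $\sigs$ and the constants appearing in (\ref{Eq: multcond})) such that $\hSig_{mult}$ and $\trho_{mult}$ satisfy (\ref{Eq: close}). I would then observe, via the trick (\ref{Eq: trick}), that $\tSig_{mult}=(\hSig_{mult})_+$ automatically inherits the same max-norm closeness to $\Sigma$ up to a factor of $2$, so the deviation bounds from (\ref{Eq: close}) remain valid (with possibly adjusted universal constants $C,c$) when $\hSig_{mult}$ is replaced by $\tSig_{mult}$ inside the CoCoLasso objective. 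With this in place, I would simply apply Theorem~\ref{th:error} to obtain the stated $\ell_1$ and $\ell_2$ bounds under assumption (\ref{eq:rec}), and Theorem~\ref{Th: gen} together with Corollary~\ref{Th: asym} to obtain sign consistency under assumption (\ref{Eq: cond}) and the scaling (\ref{Eq: scale}).

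There is effectively no obstacle beyond Lemma~\ref{Lem: multin} itself, which was already established. The only minor care needed is bookkeeping of the constants: the $\eps_0$ and $\zeta$ delivered by Lemma~\ref{Lem: multin} depend on $X_{\max}$, $M_{\min}$, $\mu_{\min}$, $\mu_{\max}$, $\taus$, $\sigs$ and $\|\beta^*_S\|_\infty$, and I would verify that the admissible range of $\lam$ and $\eps$ specified in Theorems~\ref{th:error} and \ref{Th: gen} is non-empty for these values, so the theorems apply with meaningful high-probability guarantees. Since this is a routine substitution, the proof reduces to a single line: Lemma~\ref{Lem: multin} verifies the hypothesis of Theorems~\ref{th:error} and \ref{Th: gen}, hence their conclusions carry over verbatim to the multiplicative-error and missing-data settings.
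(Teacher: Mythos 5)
Your proposal is correct and matches the paper's own argument exactly: the paper proves Lemma~\ref{Lem: multin} to verify the closeness condition (\ref{Eq: close}) for $\hSig_{mult}$ and $\trho_{mult}$, and then the corollary follows immediately by applying Theorems~\ref{th:error} and \ref{Th: gen} and Corollary~\ref{Th: asym}, which depend on the corruption mechanism only through (\ref{Eq: close}). Your extra remark about $\tSig_{mult}$ inheriting closeness via (\ref{Eq: trick}) is already built into the proofs of those theorems (through Lemma~\ref{lem:trick}), so nothing further is needed.
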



\section{Corrected cross-validation}\label{sec:cross} 
In applications, cross-validation \citep{hastibs} is a widely used technique for choosing the tuning parameter in penalized methods. However, cross validation for data corrupted with measurement error has received very little attention. In the presence of noisy/corrupted data, naive application of cross-validation is biased and a novel correction is needed. To elucidate, consider the usual $K$-fold cross validation for selecting the tuning parameter in the clean Lasso. Let ($X_k$, $y_k$) denote the true design matrix and response vector for the $k^{th}$ fold of the data for $k=1,2,\ldots,K$. Likewise, let ($X_{-k}$, $y_{-k}$) denote the design matrix and response vector respectively after removing the $k^{th}$ fold. In absence of measurement error, the estimate for the prediction error for the $k^{th}$ fold is given by $err_k(\lambda)=\frac 1{n_k} \|y_k-X_k\betah_k(\lambda) \|_2^2$ where $n_k$ is the size of the $k^{th}$ fold and $\betah_k(\lambda)$ is the Lasso estimate based on $X_{-k}$, $y_{-k}$ with tuning parameter $\lambda$. The optimal $\lambda$ is obtained by minimizing the total cross-validation error, i.e., 
\begin{equation}\label{eq:cv}
\hat \lambda =  \underset{\lambda}{\arg \min}\; \frac 1K \sum_{k=1}^K \frac 1{n_k} \|y_k-X_k\betah_k(\lambda) \|_2^2.
\end{equation} 

However, when we face noisy/corrupted data, as $X$ is unknown or partially missing, (\ref{eq:cv}) is not directly available. If we naively use the observed data $(Z,y)$, then the cross-validated choice of $\lambda$ is defined by minimizing
\begin{equation}\label{eq:cvnaive}
 \frac 1K \sum_{k=1}^K \frac 1{n_k} \|y_k-Z_k\betah_k(\lambda) \|_2^2.
\end{equation}
Even when we use the CoCoLasso (or the estimator in \ref{eq:lasso22}) to compute $\betah_k(\lambda)$ based on $Z_{-k}$, $y_{-k}$, the above criterion is biased compared to (\ref{eq:cv}) in the same way the loss function in (\ref{eq:lasso11}) is a biased version of (\ref{eq:lasso00}).

Using simple algebra we observe that (\ref{eq:cv}) is equivalent to
\begin{equation}\label{eq:cv1}
\hat \lambda = \underset{\lambda}{\arg \min}\; \frac 1K \sum_{k=1}^K \betah_k(\lambda)'\Sigma_k\betah_k(\lambda) - 2 \rho_k'\betah_k(\lambda) ,
\end{equation} 
where $\Sigma_k=\frac 1{n_k} X_k' X_k $ and $\rho_k=\frac 1{n_k} X_k' y_k$.

It may seem that using the unbiased surrogates $\hSig_k$ and $\trho_k$ in (\ref{eq:cv1}) may overcome the bias issue. However, as $\hSig_k$ possibly has negative eigen values this will lead to a cross validation function unbounded from below.

In the light of the above discussion, we propose a new cross validation method for corrupted data that adapts the same central idea used to construct CoCoLasso i.e. we can use $(\hSig_k)_+$ and $\trho_k$ in (\ref{eq:cv1}). With this correction, the cross-validated $\lambda$ is defined as 
\begin{equation}\label{eq:newcv}
\tilde \lambda = \underset{\lambda}{\arg \min} \sum_{k=1}^K \betah_k(\lambda)'(\hSig_k)_+\betah_k(\lambda) - 2 \trho_k'\betah_k(\lambda) .
\end{equation} 
We call the above procedure the corrected cross-validation. 

\section{Numerical Studies}\label{sec:sim} 

We use simulated datasets to evaluate the performance of CoCoLasso. For comparison we also included the Loh and Wainwright's method described in \cite{loh12}. For convenience, we use NCL (Non-convex Lasso) to denote  Loh and Wainwright's method in this section.

\subsection{Simulation Models}\label{sec:adsim} 

We considered both additive measurement errors and multiplicative measurement errors in the simulation study.\\

{\bf Additive errors case}. We generate data from the model $y \sim N(X\beta^*,\sigma^2 I)$ where $$\beta^*=(3,1.5,0,0,2,0,\ldots,0)'.$$ The sample size $n$ is set to be $100$ and $p=250$. The rows of $X$ are independent and identically distributed normal random variables with mean zero and covariance matrix $\Sigma_X$. We consider two models for $\Sigma_X$ --- autoregressive ($\Sigma_{X,ij}=0.5^{|i-j|}$) and compound symmetry ($\Sigma_{X,ij}=0.5 + I(i=j)*0.5$). We set $\sigma=3$ giving a signal to noise ratio of $2.36$ for autoregressive (AR) and $3.20$ for compound symmetry (CS). We generate $Z=X+A$ where the rows of $A$ are independent and identically distributed $N(0,\tau^2 I)$ where $\tau=0.75$, $1$ and $1.25$. \\

{\bf Multiplicative Errors case}. We also evaluated the performance of CoCoLasso and NCL in a multiplicative errors setup. The true model is assumed to be same as in the additive error setup. We now generate $Z=X\odot M$ where we assume that the elements of $M=((m_{ij}))$ follow log-normal distribution i.e. $\log(m_{ij})$'s are independent and identically distributed $N(0,\tau^2)$ where $\tau=0.25$, $0.5$ and $0.75$. 

\subsection{Simulation results and conclusions}
We used 5-fold corrected cross-validation for the CoCoLasso in our numerical examples. The code for NCL was provided by Dr. Po-Ling Loh.  NCL requires an initial estimator. Following \cite{oys13}, the initial estimate is a naive Lasso estimate based on $y$ and $Z$ which is tuned by 5-fold cross validation. 
NCL also requires knowledge of $\|   \beta_ S ^* \| _1$ for choosing the constraint parameter. Since, this is impossible to know beforehand, a naive 5-fold cross validation was used to select the optimal $R$ from 100 equally spaced values in $[R_{max}/500, 2*R_{max}] $ where $R_{max}$ is the $\ell_1$ norm of the initial estimate.

The accuracy of estimators is gauged by the Prediction Error (PE) and the Mean Squared Error (MSE) where
$$PE(\betah)= (\beta^*-\betah)'\Sigma_X (\beta^*-\betah)$$
and 
$$MSE(\betah)=\|\beta^*-\betah\|_2^2.$$ 
To evaluate variable selection, we record $C$ and $IC$ that denote the number of correct and incorrect predictors identified,  respectively. 

Table~\ref{tab:add} and Table~\ref{tab:mult} summarize the simulation results for the additive error case and the multiplicative error case, respectively.
\begin{table}[h]
\centering
\caption{Summary statistics for the additive error simulation study based on 100 replications. Reported numbers are the medians and standard errors ($se$) are computed by bootstrap. ``CoCo" stands for CoCoLasso. ``NCL" is the method in Loh and Wainwright (2012). AR denotes Autoregressive covariance for the predictors whereas CS denotes compound symmetry covariance. 
 }\label{tab:add}
\renewcommand{\arraystretch}{1}
\begin{tabular}{c|c|cc|cc|cc} \hline 
&  & \multicolumn{2}{c|}{$\tau=0.75$}  & \multicolumn{2}{c|}{$\tau=1.0$} & \multicolumn{2}{c}{$\tau=1.25$}\\ \hline
&  & CoCo & NCL & CoCo & NCL & CoCo & NCL\\ \hline
 & C & 3 & 3 & 3 & 2 & 3 & 2 \\
 & IC & 11 & 3 & 11 & 1 & 10 & 0 \\
AR & PE & 3.66 & 4.13 & 5.8 & 6.91 & 8.49 & 10.92 \\
 & se(PE) & 0.19 & 0.26 & 0.26 & 0.34 & 0.5 & 0.46 \\
 & MSE & 3.81 & 3.76 & 5.57 & 6.07 & 7.94 & 8.36 \\
 & se(MSE) & 0.19 & 0.18 & 0.2 & 0.27 & 0.24 & 0.3 \\
&&&&&&& \\
 & C & 2 & 2 & 2 & 1.5 & 2 & 1\\
 & IC & 14 & 11.5 & 18 & 7 & 21 & 5\\
CS & PE & 4.49 & 4.57 & 6.03 & 6.91 & 6.99 & 10.47\\
 & se(PE) & 0.22 & 0.31 & 0.22 & 0.34 & 0.25 & 0.58\\
 & MSE & 8.05 & 8.03 & 11.01 & 10.31 & 12.97 & 15.06\\
 & se(MSE) & 0.33 & 0.48 & 0.37 & 0.99 & 0.34 & 1.06 \\ \hline
\end{tabular}
\end{table}
We observe that  CoCoLasso is more accurate than NCL as measured by PE and MSE, and the gap between the two methods widens as the perturbation level increases (measured by $\tau$).
NCL tends to select a sparser model than CoCoLasso, it tends to miss importance variables as the noise level is high. 

\begin{table}[!t]
\centering
\caption{Summary statistics for the multiplicative error simulation study based on 100 replications. Reported numbers are the medians and standard errors ($se$) are computed by bootstrap. ``CoCo" stands for CoCoLasso. ``NCL" is the method in Loh and Wainwright (2012). AR denotes Autoregressive covariance for the predictors whereas CS denotes compound symmetry covariance. 
 }\label{tab:mult}
\renewcommand{\arraystretch}{1}
\begin{tabular}{c|c|cc|cc|cc} \hline 
&  & \multicolumn{2}{c|}{$\tau=0.25$}  & \multicolumn{2}{c|}{$\tau=0.5$} & \multicolumn{2}{c}{$\tau=0.75$}\\ \hline
&  & CoCo & NCL & CoCo & NCL & CoCo & NCL\\ \hline
 & C & 3 & 3 & 3 & 3 & 3 & 2\\
 & IC & 14 & 12 & 12 & 6 & 10 & 1\\
AR & PE & 2.02 & 2.47 & 3.25 & 3.58 & 7.32 & 8.32\\
 & se(PE) & 0.15 & 0.18 & 0.14 & 0.25 & 0.2 & 0.29\\
 & MSE & 1.95 & 2.26 & 2.93 & 3.09 & 6.19 & 6.58\\
 & se(MSE) & 0.09 & 0.14 & 0.14 & 0.18 & 0.2 & 0.26\\
&&&&&&& \\

 & C & 3 & 3 & 3 & 3 & 2 & 1\\
 & IC & 15 & 18 & 13 & 11 & 16 & 4\\
CS & PE & 2.23 & 2.37 & 3.66 & 3.82 & 7.93 & 9.31\\
 & se(PE) & 0.16 & 0.1 & 0.15 & 0.19 & 0.3 & 0.41\\
 & MSE & 4.21 & 4.32 & 6.11 & 5.75 & 10.43 & 9.34\\
 & se(MSE) & 0.27 & 0.21 & 0.27 & 0.26 & 0.25 & 0.61\\ \hline
\end{tabular}
\end{table}

\section{Summary}
In this paper we have proposed a novel convex approach to modify the classical Lasso with the clean data to handle the noisy data case. Our approach, named CoCoLasso, is easy to understand, easy to use and has solid theoretical foundations. We also have devised a novel cross validation methods for corrupted data. We have demonstrated the superior performance of our method over the non-convex approach in Loh and Wainwright (2012) by simulation studies. 

Finally, we would like to comment on the generality of the CoCoLasso approach.  Although we use the Lasso to illustrate the idea of CoCoLasso, the basic approach of CoCoLasso can be directly used in conjunction with other popular convex penalized methods. For example,  the fused Lasso \cite{fusedlasso} is a popular technique for ordered variable selection. Following the development of CoCoLasso, we can readily develop  CoCo-FusedLasso. We opt not to discuss these variants in the present paper.

\section{Proofs}\label{sec:proofs}
In this section we present the proofs of Theorems \ref{th:error} and \ref{Th: gen} as well as Lemmas \ref{Lem: addin} and \ref{Lem: multin}. A few useful properties and technical results about sub-Gaussian random variables required in the proofs are provided in Appendix~\ref{app:subg}. Throughout this section we denote $C$ and $c$ to be universal constants whose values may vary across different expressions. We also introduce a few additional notations used subsequently in the proofs.\begin{equation}\label{Eq: extracond}
\begin{array}{cccc}
D = \tSig - \Sigma,  \quad & G = \Sigma_{S^c,S}\Sigma_{S,S}^{-1}, \quad & \tG = \tSig_{S^c,S}\tSig_{S,S}^{-1}, \quad & H=\tG-G\\
F=\tSig_{S,S}^{-1}-\Sigma_{S,S}^{-1}, \quad & \phi = ||\Sigma_{S,S}^{-1} ||_ \infty,  \quad & \psi=||\Sigma_{S,S} ||_ \infty, \quad & B=||\beta^*_S \linf 
\end{array}
\end{equation}

\subsection{Proof of Theorem \ref{th:error}}\label{sec:therror} We first state and prove a simple result which will be later used in the proof:

\begin{lemma}\label{lem:trick} For any $\epsilon > 0$ we have,
\begin{equation}\label{eq:prick}
Pr(\| \tSig-\Sigma \|_ {\max} \geq \eps) \leq p^2 \max_{i,j} Pr(|\hSig_{ij}-\Sigma_{ij} | \geq \eps/2)
\end{equation}
\end{lemma}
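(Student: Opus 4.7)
The plan is to combine the max-norm triangle-inequality trick already established in equation~(\ref{Eq: trick}) with a routine union bound.

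First I would invoke (\ref{Eq: trick}), namely $\|\tSig-\Sigma\|_{\max}\leq 2\|\hSig-\Sigma\|_{\max}$, which itself relies on $\tSig=(\hSig)_+$ being the nearest positive semi-definite matrix to $\hSig$ in the $\|\cdot\|_{\max}$ norm (so that $\|\tSig-\hSig\|_{\max}\leq \|\Sigma-\hSig\|_{\max}$, since $\Sigma$ is PSD and hence a feasible candidate). This immediately yields the containment of events
\[
\{\|\tSig-\Sigma\|_{\max}\geq \eps\}\;\subseteq\;\{\|\hSig-\Sigma\|_{\max}\geq \eps/2\},
\]
so that $Pr(\|\tSig-\Sigma\|_{\max}\geq \eps)\leq Pr(\|\hSig-\Sigma\|_{\max}\geq \eps/2)$.

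Next I would expand the max norm: $\|\hSig-\Sigma\|_{\max}\geq \eps/2$ is exactly the event $\bigcup_{i,j}\{|\hSig_{ij}-\Sigma_{ij}|\geq \eps/2\}$. A union bound over the $p^2$ entries gives
\[
Pr\bigl(\|\hSig-\Sigma\|_{\max}\geq \eps/2\bigr)\;\leq\;\sum_{i,j}Pr\bigl(|\hSig_{ij}-\Sigma_{ij}|\geq \eps/2\bigr)\;\leq\;p^2\max_{i,j}Pr\bigl(|\hSig_{ij}-\Sigma_{ij}|\geq \eps/2\bigr),
\]
which chained with the previous display yields the stated bound.

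There is no real obstacle here; the only substantive ingredient is the optimality of $\tSig$ as a max-norm projection onto the PSD cone, which is already packaged in (\ref{Eq: trick}). The rest is just the definition of $\|\cdot\|_{\max}$ and a union bound. I would keep the proof to a couple of lines, simply citing (\ref{Eq: trick}) and noting the union bound over the $p^2$ coordinatewise deviation events.
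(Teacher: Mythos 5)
Your proof is correct and follows essentially the same route as the paper: invoke the bound $\|\tSig-\Sigma\|_{\max}\leq 2\|\hSig-\Sigma\|_{\max}$ from (\ref{Eq: trick}) to pass to the event $\{\|\hSig-\Sigma\|_{\max}\geq \eps/2\}$, then apply a union bound over the $p^2$ entries. Your additional remark justifying (\ref{Eq: trick}) via the max-norm optimality of $\tSig=(\hSig)_+$ and the feasibility of the PSD matrix $\Sigma$ is exactly the reasoning the paper packages into that equation.
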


\begin{proof}
From Equation (\ref{Eq: trick}) we have \[
Pr(\| \tSig-\Sigma \|_ {\max} \geq \eps) \leq Pr(\| \hSig-\Sigma \|_ {\max} \geq \eps/2) \]
The proof then follows using union bounds over $Pr(|\hSig_{ij}-\Sigma_{ij} | \geq \eps/2)$.
\end{proof}

\begin{proof} [Proof of Theorem \ref{th:error}]
The general idea of the proof closely resembles the proofs of \cite[Lemma 6.3 and Theorem 6.1]{buhl} for obtaining the error bounds of the traditional Lasso estimate. From the definition of $\betah$ in (\ref{eq:canl}), we have
\begin{align*}
\frac 12 \betah'\tSig\betah - \trho'\betah + \lam \|\betah \|_1 \leq \frac 12 \beta^{*T}\tSig\beta^* - \trho'\beta^* + \lam \|\beta^* \|_1
\end{align*}
Expanding $\betah$ as $\vhat+\beta^*$ where $\vhat=\betah-\beta^*$, this simplifies to 
\begin{equation}\label{eq:orineq1} \frac 12 \vhat'\tSig\vhat + \lam \|\betah\|_1  \leq \vhat'(\trho-\tSig\beta^*)+\lam\|\beta^*\|_1 
 \leq ||\vhat\|_1 ||\trho-\tSig\beta^* \|_ \infty + \lam\|\beta^*\|_1 
\end{equation}
In order to obtain an upper bound for the left hand side we first bound the quantity $\|\trho-\tSig\beta^*\|_\infty$. Using triangular inequality we have
\begin{align*}
\|\trho - \tSig\beta^* \|_\infty \leq \|\trho-\rho\linf+ \|\rho-\Sigma\beta^*\linf+\|D\beta^*\linf
\end{align*}
Using union bounds on the second equation of (\ref{Eq: close}) we see that for $\lam \leq 6\eps_0$, we have $P(\|\trho-\rho\linf > \lam/6) \leq pC\exp\left(-ncs^{-2}\lam^2\zeta^{-1}\right)$. As $\|D\beta^*\linf \leq sB\|D\|_{\max}$, Lemma \ref{lem:trick} alongwith the first equation of (\ref{Eq: close}) implies that for $\lam \leq 12B\eps_0$, $P(sB\|D\|_{\max} > \lam/6) \leq p^2C\exp\left(-ncs^{-2}\lam^2\zeta^{-1}B^{-2}\right)$. The third component $\rho-\Sigma\beta^* = \frac 1n X'w$ is a linear combination of independent sub-Gaussian errors $w$. As the columns of $X$ are normalized, invoking property \ref{eq:hoeff}, we have $P(\|\rho-\Sigma\beta^*\linf > \lambda/6) \leq pC\exp\left(-nc\lam^2\sigma^{-2}\right)$. Redefining $\zeta=max(\zeta, B^2\zeta, \sigma^2)$ we have 
\begin{align*}
\|\trho-\tSig\beta^*\linf < \lambda/2 \mbox{ on } {\calF} \mbox{ where } P({\calF}) \geq 1-p^2C\exp\left(-ncs^{-2}\lam^2\zeta^{-1}\right)
\end{align*}
For the remainder of the proof we restrict ourselves to $\calF$ adjusting for the probability of $\calF^c$. Returning to Equation (\ref{eq:orineq1}), we now have on $\calF$,
\begin{align*}
\frac 12 \vhat'\tSig\vhat + \lam \|\betah\|_1  \leq \frac \lam 2 ||\vhat\|_1 + \lam\|\beta^*\|_1 
\end{align*}
Since $\beta^*_{S^c}=0$, we know that $\vhat_{S^c}=\betah_{S^c}$, $\|\beta^*\|_1=\|\beta^*_S\|_1$. Also for any vector $x$, we can write $\|x\|_1=\|x_S\|_1+\|x_{S^c}\|_1$. Combining these, we have: 
\begin{align*}
\frac 12 \vhat'\tSig\vhat + \lam \|\betah_S\|_1 + \lam \|\vhat_{S^c}\|_1 \leq \frac \lam 2 ||\vhat_S\|_1 + \frac \lam 2 ||\vhat_{S^c}\|_1 + \lam\|\beta^*_S\|_1 
\end{align*}
Using the fact that $\|\betah_S\|_1 \geq \| \beta^*_S\|_1 - \| \vhat_S \|_1 $, we now have 
\begin{align}\label{eq:orineq2}
\vhat'\tSig\vhat + \lam \|\vhat_{S^c}\|_1 \leq 3\lam ||\vhat_S\|_1 
\end{align}
As $\vhat'\tSig\vhat \geq 0$, we have that on $\calF$, $\|\vhat_{S^c}\|_1 \leq 3 ||\vhat_S\|_1$. The Restricted Eigenvalue Condition (\ref{eq:rec}) immediately implies that on $\calF$, $\vhat'\Sigma\vhat \geq \Omega \|\vhat \|_2^2 $. Now 
\begin{align*} 
\vhat' \Sigma\vhat + \lam \|\vhat\|_1 & = \vhat'\tSig\vhat + \lam \|\vhat_{S}\|_1 + \lam \|\vhat_{S^c}\|_1 + \vhat' D \vhat\\ 
& \leq 4\lam \|\vhat_{S}\|_1 + \vhat' D \vhat \mbox{ using Eqn. (\ref{eq:orineq2}) } \\ 
& \leq 4\lam \sqrt s \|\vhat_{S}\|_2 + \vhat' D \vhat \leq 4\lam \sqrt s \|\vhat \|_2 + \vhat' D \vhat \\
& \leq 4\lam \sqrt {s} \sqrt {\frac {\vhat'\Sigma\vhat}{\Omega}} + \vhat' D \vhat \mbox{ using condition (\ref{eq:rec}) }  \\
& \leq \frac {\vhat'\Sigma\vhat}{4} +  \frac {16\lam^2 s}\Omega + |\vhat' D \vhat| \mbox{ using } 4ab \leq a^2/4 + 16b^2
\end{align*}
The last term on the right hand side is bounded as follows, 
\begin{align*}
|\vhat' D \vhat| & \leq \|D\|_{\max} \|\vhat\|_1^2 = \|D\|_{\max} (\|\vhat_S\|_1+\|\vhat_{S^c}\|_1)^2  \leq 16 ||D\|_{\max} \|\vhat_S\|_1^2 \mbox{ on } \calF \\
& \leq 16s||D\|_{\max} \|\vhat_S \|_2^2 \leq 16s||D\|_{\max} \|\vhat \|_2^2
\end{align*}
Using Lemma \ref{lem:trick} and the closeness condition (\ref{Eq: close}), for $\eps \leq min(\eps_0, \Omega/64s)$, 
\begin{align*}
P(16s\|D\|_{\max} > \Omega/4) = P(\|D\|_{\max} > \Omega/64s) \leq p^2C\exp\left(-nc\eps^2\zeta^{-1} \right)
\end{align*}
With probability at least $1-p^2C\exp\left(-nc\eps^2\zeta^{-1} \right)-p^2C\exp\left(-ncs^{-2}\lam^2\zeta^{-1} \right)$ we now have
\begin{align*}
\vhat' \Sigma\vhat + \lam \|\vhat\|_1 \leq \frac {\vhat'\Sigma\vhat}{4} +  \frac {16\lam^2 s}{\Omega} + \frac \Omega 4 \|\vhat \|_2^2
\end{align*}
One more application of the restricted eigenvalue condition (\ref{eq:rec}) now yields
\begin{align*}
\frac \Omega 2 \|\vhat\|_2^2 + \lam \|\vhat\|_1 \leq \frac {16\lam^2 s}{\Omega}
\end{align*}
which proves the bounds for both the $\ell_1$ and $\ell_2$ errors in Theorem \ref{th:error}
\end{proof}

\subsection{Proof of Theorem \ref{Th: gen}}\label{th:sign} The proof for the sign consistency result of the CoCoLasso is involved. We first present a series of results required to prove Theorem \ref{Th: gen}.

\begin{lemma} \label{Lem: duality} Let $\partial||x||_1$ denotes the sub-gradient of $||x||_1$ for any vector $x$. Then we have the following results:
(a) $\betah$ is the optimal solution to $\tilde f (\beta) = (1/2)\beta'\tSig\beta - \trho'\beta + \lam|\beta\|_1$ iff there exists a vector $ \tu $ in $\partial \|\betah \|_ 1$ such that
\begin{equation}\label{Eq: gradient}
\tSig \betah - \trho +\lam \tu = 0
\end{equation}
(b) If $|\tu_j| < 1 \; \forall j \in S^c$, then any other optimal solution $\tilde \beta$ will have support $S(\tilde \beta) \subseteq S$
(c) If we assume that $\tSig_ {S(\hat\beta),S(\hat\beta)}$ is invertible then under the conditions of part (b), $\tilde f(\beta)$ has unique minima
\end{lemma}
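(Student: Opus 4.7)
The plan is to treat each of the three parts using standard convex analysis, with the substantive argument concentrated in part (b).

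For part (a), I would note that since $\tSig \geq 0$, the objective $\tilde f$ is convex, so $\betah$ minimizes $\tilde f$ if and only if $0 \in \partial \tilde f(\betah)$. The subdifferential sum rule gives $\partial \tilde f(\beta) = \{\tSig\beta - \trho\} + \lam\,\partial\|\beta\|_1$, so optimality is equivalent to the existence of some $\tu \in \partial\|\betah\|_1$ satisfying $\tSig\betah - \trho + \lam\tu = 0$.

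For part (b), I would let $\tilde\beta$ be any other minimizer, substitute the KKT identity $\trho = \tSig\betah + \lam\tu$ from (a) into $\tilde f(\tilde\beta) - \tilde f(\betah) = 0$, and collect terms to obtain
\[
0 \;=\; \tfrac{1}{2}(\tilde\beta-\betah)'\tSig(\tilde\beta-\betah) \;+\; \lam\bigl[\|\tilde\beta\|_1 - \|\betah\|_1 - \tu'(\tilde\beta-\betah)\bigr].
\]
Both summands are nonnegative: the first because $\tSig \geq 0$, the second because $\tu \in \partial\|\betah\|_1$ implies the subgradient inequality $\|\tilde\beta\|_1 \geq \|\betah\|_1 + \tu'(\tilde\beta - \betah)$. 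Consequently both must vanish. Writing the bracketed term coordinatewise as $\sum_j \bigl[|\tilde\beta_j| - |\betah_j| - \tu_j(\tilde\beta_j - \betah_j)\bigr] = 0$, each summand is itself a nonnegative subgradient gap for $|\cdot|$ at $\betah_j$, so each must vanish individually. For $j \in S^c$, where $\betah_j = 0$, this reduces to $|\tilde\beta_j| = \tu_j\tilde\beta_j$; since $|\tu_j| < 1$, this forces $\tilde\beta_j = 0$, giving $S(\tilde\beta) \subseteq S$.

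For part (c), suppose $\tilde\beta \neq \betah$ is another minimizer. The vanishing of the quadratic term from (b), together with $\tSig \geq 0$, implies $\tSig(\tilde\beta - \betah) = 0$. Restricting this identity to the rows indexed by $S(\betah)$ and using that $\tilde\beta - \betah$ is supported in $S(\betah)$ (the regime of interest for Theorem~\ref{Th: gen}, where the primal-dual-witness construction gives $S(\betah) = S$), we get $\tSig_{S(\betah),S(\betah)}(\tilde\beta - \betah)_{S(\betah)} = 0$. Invertibility of $\tSig_{S(\betah),S(\betah)}$ then forces $\tilde\beta = \betah$, a contradiction. The only step with real content is the coordinatewise decomposition in part (b), where the strict bound $|\tu_j|<1$ is used to peel off the $S^c$ contribution; parts (a) and (c) reduce to routine applications of subdifferential calculus and linear algebra, so I anticipate no substantial obstacles.
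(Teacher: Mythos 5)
Your parts (a) and (b) are correct, and they actually supply the argument the paper never writes out: the paper's ``proof'' of this lemma is a one-line deferral to Lemma 1 of \cite{wainsgn}, and your KKT-substitution identity
\[
0 \;=\; \tfrac12(\tilde\beta-\betah)'\tSig(\tilde\beta-\betah) \;+\; \lam\bigl[\|\tilde\beta\|_1-\|\betah\|_1-\tu'(\tilde\beta-\betah)\bigr],
\]
followed by the coordinatewise peeling of the nonnegative subgradient gaps, is precisely the standard proof of that cited lemma. Two points you use silently but should state: the subdifferential of $\|\cdot\|_1$ factors coordinatewise (this is what makes each summand a legitimate scalar subgradient gap, hence nonnegative), and $|\tu_j|<1$ together with $\tu_j\in\partial|\betah_j|$ already forces $\betah_j=0$ for $j\in S^c$, which is why you may write ``where $\betah_j=0$''.

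Part (c), however, has a genuine gap. Part (b) only gives that every minimizer is supported in $S$, so $\tilde\beta-\betah$ is supported in $S$, not in $S(\betah)$, and your restriction of $\tSig(\tilde\beta-\betah)=0$ to the block $S(\betah)\times S(\betah)$ is not justified. The parenthetical appeal to Theorem~\ref{Th: gen} cannot repair this: a lemma cannot import hypotheses from its later application, and even inside the primal--dual witness construction $S(\betah)=S$ is not guaranteed --- $\betah_S$ there solves the restricted program (\ref{Eq: rest betahat}) and may have zero coordinates, which is exactly why Theorem~\ref{Th: gen}(c) needs the $\beta^*_{\min}$ condition. Indeed, under the lemma's literal hypothesis the conclusion is false: take $p=3$, $S=\{1,2\}$, $\tSig=\mathrm{diag}(1,0,1)\ge 0$, $\trho=(1+\lam,\lam,0)'$. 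The minimizers are exactly $\{(1,t,0)':t\ge 0\}$; at $\betah=(1,0,0)'$ the subgradient is $\tu=(1,1,0)'$, so strict dual feasibility holds on $S^c=\{3\}$, and $\tSig_{S(\betah),S(\betah)}=(1)$ is invertible, yet the minimizer is not unique. The correct hypothesis --- the one in Wainwright's Lemma 1(c), and the one the paper actually verifies (Lemma~\ref{Lem: inv} proves $\tSig_{S,S}>0$) and uses in Theorem~\ref{Th: gen} --- is invertibility of the full block $\tSig_{S,S}$. With that replacement your own computation closes the proof: $(\tilde\beta-\betah)'\tSig(\tilde\beta-\betah)=0$ and $\tSig\ge 0$ give $\tSig(\tilde\beta-\betah)=0$; since $\tilde\beta-\betah$ is supported in $S$, restricting to $S$ gives $\tSig_{S,S}(\tilde\beta-\betah)_S=0$, hence $\tilde\beta=\betah$.
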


\begin{proof}
This lemma is a modified version of \cite[Lemma 1]{wainsgn}. We omit the proof as it is exactly analogous to that in the paper. 
\end{proof}

Note that the invertibility assumption of part (c) of Lemma~\ref{Lem: duality} needs to hold to establish the uniqueness of the Lasso solution. We now show that this occurs with probability tending to 1. For notational convenience, we define:
\begin{equation}
 \dt=p^2 C \exp\left(-cns^{-2}\eps^2\zeta^{-1}\right)
\end{equation}

\begin{lemma}\label{Lem: inv}
$Pr(\tSig_{S,S} > 0) \geq 1-\dt$ for all $\eps \leq min(\eps_0,C_{\min}/2)$
\end{lemma}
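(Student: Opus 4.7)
The goal is to lower-bound $\Lambda_{\min}(\tSig_{S,S})$ with high probability. The natural tool is Weyl's inequality: for symmetric matrices,
\[
\Lambda_{\min}(\tSig_{S,S}) \;\geq\; \Lambda_{\min}(\Sigma_{S,S}) - \|\tSig_{S,S}-\Sigma_{S,S}\|_{op} \;=\; C_{\min} - \|\tSig_{S,S}-\Sigma_{S,S}\|_{op}.
\]
So the plan is to show that, with probability at least $1 - \dt$, $\|\tSig_{S,S}-\Sigma_{S,S}\|_{op}$ is at most $\eps \leq C_{\min}/2$, from which positive definiteness (in fact $\Lambda_{\min}(\tSig_{S,S}) \geq C_{\min}/2$) follows.

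The bridge from the hypotheses (which control $\hSig-\Sigma$ entrywise) to an operator-norm bound goes through two elementary inequalities. First, since $\tSig_{S,S}-\Sigma_{S,S}$ is an $s \times s$ submatrix of $\tSig-\Sigma$, the Frobenius/$\ell_\infty$-row bound gives
\[
\|\tSig_{S,S}-\Sigma_{S,S}\|_{op} \;\leq\; s\,\|\tSig_{S,S}-\Sigma_{S,S}\|_{\max} \;\leq\; s\,\|\tSig-\Sigma\|_{\max}.
\]
Second, the key inequality (\ref{Eq: trick}) from the paper's construction of $\tSig = (\hSig)_+$ gives
\[
\|\tSig-\Sigma\|_{\max} \;\leq\; 2\,\|\hSig-\Sigma\|_{\max}.
\]
Combining, it suffices to show that $\|\hSig-\Sigma\|_{\max} \leq \eps/(2s)$ with probability at least $1-\dt$: this forces $\|\tSig_{S,S}-\Sigma_{S,S}\|_{op} \leq \eps \leq C_{\min}/2$, whence $\tSig_{S,S} > 0$.

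For the probability bound, I would apply a union bound over the $p^2$ entries and invoke the first inequality in the closeness condition (\ref{Eq: close}) with threshold $\eps' = \eps/(2s)$. The validity condition $\eps' \leq \eps_0$ is free since $\eps \leq \eps_0$ and $s \geq 1$. This yields
\[
\Pr\bigl(\|\hSig-\Sigma\|_{\max} > \eps/(2s)\bigr) \;\leq\; p^{2} C \exp\!\Bigl(-cn \cdot \tfrac{\eps^2}{4s^{2}}\,\zeta^{-1}\Bigr),
\]
and absorbing the constant $1/4$ into $c$ gives exactly $\dt = p^{2}C\exp(-cns^{-2}\eps^{2}\zeta^{-1})$.

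There is no real obstacle here — the proof is essentially routine bookkeeping once Weyl plus the max-to-op norm conversion is in mind. The only noteworthy point is conceptual: the $s^{-2}$ factor appearing in the exponent of $\dt$ is precisely the cost of converting the entrywise closeness of $\hSig$ to $\Sigma$ into an operator-norm closeness on the $s$-dimensional block $\tSig_{S,S}$; this is why the lemma is stated with this particular exponent rather than the $O(n\eps^2/\zeta)$ rate that controls individual entries in (\ref{Eq: close}).
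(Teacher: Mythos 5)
Your proof is correct and follows essentially the same route as the paper: Weyl's inequality to reduce to bounding $\|\tSig_{S,S}-\Sigma_{S,S}\|_{2}$, the conversion $\|\cdot\|_{2}\leq s\|\cdot\|_{\max}$, and then the entrywise closeness condition combined with inequality (\ref{Eq: trick}) and a union bound (which the paper packages as Lemma \ref{lem:trick}). The only difference is that you unpack that auxiliary lemma explicitly, which changes nothing of substance.
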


\begin{proof}
From Equation (\ref{Eq: extracond}), we have
\begin{align*}
 \Lambda_{\min}(\tSig_{S,S}) \geq & \Lambda_{\min}(\Sigma_{S,S})- |\Lambda_{\max}(-D_{S,S})|  \geq C_{\min}-||D_{S,S}||_2 \\
 \geq& C_{\min} - s ||D_{S,S} ||_ {\max} \geq C_{\min} - s ||D ||_ {\max} \geq C_{\min}/2
\end{align*}
where the last inequality occurs with probability at least $1-\dt$ for $\eps \leq min(\eps_0,C_{\min}/2)$
\end{proof}

\begin{lemma}\label{Lem: extrabounds}
If $\hSig$ and $\trho$ satisfy (\ref{Eq: close}), then there exists positive constants $C$, $c$ such that for every $\eps \leq min(\eps_0,1/\phi)$, 
\begin{equation}\label{Eq: extrabounds}
\begin{array}{c}
Pr(|| F ||_ \infty \geq \eps\phi^2(1-\phi\eps)^{-1}) \leq \dt  \\
Pr(|| H ||_ \infty \geq \eps\phi(2-\gamma)(1-\phi\eps)^{-1}) \leq \dt
\end{array}
\end{equation}
\end{lemma}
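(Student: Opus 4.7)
The plan is to prove both inequalities on a single high-probability event on which the elementwise error $\|D\|_{\max}$ is small, using deterministic matrix identities to translate this into bounds on $\|F\linf$ and $\|H\linf$, and only at the end invoking Lemma~\ref{lem:trick} and the closeness condition~(\ref{Eq: close}) to control the probability of the complementary event.

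For the bound on $\|F\linf$, I would start from the resolvent identity
\begin{equation*}
F \;=\; \tSig_{S,S}^{-1}-\Sigma_{S,S}^{-1} \;=\; -\Sigma_{S,S}^{-1}D_{S,S}\tSig_{S,S}^{-1},
\end{equation*}
obtained by factoring $\tSig_{S,S}=\Sigma_{S,S}(I+\Sigma_{S,S}^{-1}D_{S,S})$. Submultiplicativity of $\|\cdot\linf$ gives $\|F\linf \leq \phi\,\|D_{S,S}\linf\,\|\tSig_{S,S}^{-1}\linf$. The Neumann series bound $\|(I+M)^{-1}\linf \leq (1-\|M\linf)^{-1}$ applied to $M=\Sigma_{S,S}^{-1}D_{S,S}$, whose norm is at most $\phi\,\|D_{S,S}\linf$, then yields $\|\tSig_{S,S}^{-1}\linf \leq \phi/(1-\phi\,\|D_{S,S}\linf)$. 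Combining these gives $\|F\linf \leq \phi^2 \|D_{S,S}\linf/(1-\phi\,\|D_{S,S}\linf)$, which is monotone increasing in $\|D_{S,S}\linf$ whenever the denominator is positive.

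For the bound on $\|H\linf$, the key is to decompose
\begin{equation*}
H \;=\; D_{S^c,S}\tSig_{S,S}^{-1} + \Sigma_{S^c,S}F
\end{equation*}
by adding and subtracting $\Sigma_{S^c,S}\tSig_{S,S}^{-1}$, and then to substitute $F=-\Sigma_{S,S}^{-1}D_{S,S}\tSig_{S,S}^{-1}$ so that $\Sigma_{S^c,S}\Sigma_{S,S}^{-1}=\So$ appears as a direct multiplier of $D_{S,S}$. This collapses $H$ to $(D_{S^c,S}-\So D_{S,S})\tSig_{S,S}^{-1}$, and the irrepresentable condition $\|\So\linf=1-\gamma$ gives
\begin{equation*}
\|H\linf \;\leq\; \frac{\phi\bigl(\|D_{S^c,S}\linf + (1-\gamma)\|D_{S,S}\linf\bigr)}{1-\phi\,\|D_{S,S}\linf}.
\end{equation*}

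Since $S$ has $s$ elements, every row sum defining $\|D_{S,S}\linf$ or $\|D_{S^c,S}\linf$ contains at most $s$ terms, so both quantities are bounded by $s\,\|D\|_{\max}$. On the event $\{s\,\|D\|_{\max}\leq \eps\}$, and using $\eps\leq 1/\phi$ to keep the denominator bounded away from zero, the two displays above immediately give the stated inequalities; for $H$ the numerator simplifies to $\phi\eps(1+(1-\gamma))=\phi\eps(2-\gamma)$. The probability of the bad event is then handled in one stroke by Lemma~\ref{lem:trick} combined with the first line of~(\ref{Eq: close}) applied at level $\eps/s$, giving $Pr(s\,\|D\|_{\max}>\eps)\leq p^2 C\exp(-cn\eps^2 s^{-2}\zeta^{-1})=\dt$ after absorbing numerical constants into $c$.

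The main point of care is the choice of decomposition for $H$: the naive split that bounds $\|\Sigma_{S^c,S}F\linf$ by $\|\Sigma_{S^c,S}\linf\|F\linf$ would leave a free factor $\|\Sigma_{S^c,S}\linf$ for which neither the irrepresentable condition nor the minimum eigenvalue condition give direct control. Rewriting $\Sigma_{S^c,S}F$ through the resolvent identity so that $\So$ surfaces explicitly is what produces the clean coefficient $(2-\gamma)$ and lets both bounds be read off from the single scalar $\|D_{S,S}\linf$. Everything else is bookkeeping with $\ell_\infty$ submultiplicativity, a Neumann series, and a union bound.
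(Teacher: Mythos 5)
Your proof is correct, and its probabilistic half is exactly the paper's: both arguments reduce $\|D_{S,S}\linf$ and $\|D_{S^c,S}\linf$ to $s\|D\|_{\max}$ (each row of these blocks has only $s$ entries), then invoke Lemma~\ref{lem:trick} together with the first line of (\ref{Eq: close}) to bound the failure probability by $\dt$ after absorbing numerical constants into $c$. The difference is in the deterministic half: the paper stops at the probability reduction and cites \cite[Lemma A2]{huisda} for converting the bounds $\|D_{S,S}\linf \leq \eps$, $\|D_{S^c,S}\linf \leq \eps$ into the stated bounds on $\|F\linf$ and $\|H\linf$, whereas you prove that step yourself. Your derivation --- the resolvent identity $F=-\Sigma_{S,S}^{-1}D_{S,S}\tSig_{S,S}^{-1}$, the Neumann-series bound $\|\tSig_{S,S}^{-1}\linf \leq \phi(1-\phi\eps)^{-1}$, and the collapse $H=(D_{S^c,S}-\So D_{S,S})\tSig_{S,S}^{-1}$ so that $\|\So\linf=1-\gamma$ produces the factor $2-\gamma$ --- is the standard argument behind perturbation bounds of this form, and it reproduces the lemma's constants exactly. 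It has two side benefits over the paper's write-up: it is self-contained, and it establishes invertibility of $\tSig_{S,S}$ on the good event (needed for $F$ and $H$ to even be defined), which the paper leaves implicit in the citation. You are also right that the naive bound $\|\Sigma_{S^c,S}F\linf \leq \|\Sigma_{S^c,S}\linf\|F\linf$ would be a dead end, since neither condition in (\ref{Eq: cond}) controls $\|\Sigma_{S^c,S}\linf$; routing $\Sigma_{S^c,S}$ through $\So$ is the essential move.
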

\begin{proof} Let $\eta_1 = || D_{S,S} \linf$ and $\eta_2 = || D_{S^c,S} \linf $. Now, $\sum_{j=1}^s |D_{ij}| \leq s||D||_{\max} $ for $(i=1,\ldots,s)$. Consequently, if $||D||_{\max} \leq \eps/s$ then both $\eta_1$ and $\eta_2$ are less than $\eps$. From (\ref{Eq: close}) and (\ref{eq:prick}), $Pr(\eta_1 \leq \eps, \eta_2 \leq \eps) \geq 1-\dt$ for $\eps \leq \eps_0$. The remainder of the proof follows from \cite[Lemma A2]{huisda}.
\end{proof}

\begin{proof}[Proof of Theorem \ref{Th: gen} Part (a)]
We use a Primal Dual Witness construction technique similar to \cite{wainsgn} to prove Theorem \ref{Th: gen}. Let $\betah_S$ be the solution to the restricted modified Lasso program i.e. 
\begin{equation}\label{Eq: rest betahat}
\betah_S = \underset {\beta_S}{arg\; min} \; \tilde  f_S(\beta_S) \mbox{ where } \tilde  f_S(\beta_S)= \frac 12 \beta_S'\tSig_{S,S}\beta_S - \trho_S'\beta_S + \lam \|\beta_S \|_1
\end{equation}
Let $\betah = (\betah_S',0_{(p-s)\times 1}')'$ and $\tu=(\tu_S',\tu_{S^c}')'$ where $\tu_S \in \partial(||\betah_S||_1)$ and $\tu_{S^c}$ is some unspecified $(p-s)\times 1$ vector. From part (a) of Lemma \ref{Lem: duality}, we observe that $\betah$ is an optimal solution to (\ref{eq:canl}) iff $\{\betah,\tu\}$ satisfies:
\begin{equation}\label{Eq: tb}
\begin{array}{c}
\tSig_{S,S} \betah_S - \trho_S +\lam \tu_S = 0 \\
\tSig_{S^c,S}\betah_S - \trho_{S^c} +\lam \tu_{S^c} = 0
\end{array}
\end{equation}
Solving for $\betah_S$ and $\tu_{S^c}$ from Equation~(\ref{Eq: tb}) we have:
\begin{equation}\label{Eq: dualfeas vector}
\begin{array}{c}
\betah_S=\tSig_{S,S}^{-1}(\trho_S-\lam \tu_{S} ), \qquad 
\tu_{S^c} = \tG\tu_S + \frac 1{\lam}\left (\trho_{S^c} - \tG\trho_S\right)
\end{array}
\end{equation}

From parts (b) and (c) of Lemma~\ref{Lem: duality}, we see that $\betah$ will be the unique solution to (\ref{eq:canl}) if $\tSig_{S,S} $ is non-singular and all the entries of $\tu_{S^c}$ have absolute values less than 1. Lemma~\ref{Lem: inv} provides lower bounds for $Pr(\tSig_{S,S} > 0)$. We now derive the bounds for $Pr(||\tu_{S^c}||_\infty < 1)$. We expand $\tu_{S^c}$ as :
\begin{align*} \tu _{S^c} = & \; G\tu _S + H\tu _S + \frac 1{\lam}\left((\trho_{S^c}-\rho_{S^c})+(\rho_{S^c}-G\rho_S) +  G (\rho_{S}-\trho_S)-H\trho_S \right) \\
=& \; G\tu _S + H\left( \tu_s + \frac 1\lambda  (\rho_S-\trho_S) - \frac 1\lambda \rho_S \right) \\
&+ \frac 1{\lam}\left((\trho_{S^c}-\rho_{S^c})+(\rho_{S^c}-G\rho_S) +  G (\rho_{S}-\trho_S)\right)
\end{align*}
Taking the absolute values and using triangular inequalities, we have:
\begin{align*}
||\tu _{S^c}||_ \infty & \leq ||G\tu_S||_\infty + ||H||_ \infty  \left(1 + \frac 1{\lam} ||\trho_S-\rho_S||_ \infty + \frac 1{\lam} ||\rho_S||_ \infty \right) \\ 
&  \frac 1{\lam} ||\rho_{S^c}-G\rho_S||_ \infty + \left( \frac 1{\lam}||\trho_{S^c}-\rho_{S^c}||_ \infty  + \frac 1{\lam}||G(\tilde \rho_{S}-\rho_{S})||_\infty \right)
\end{align*}

We bound each of the four terms on the right hand side separately. The irrepresentable condition (\ref{Eq: cond}) implies that $||G\tu_S||_\infty < (1-\gamma)$. It also implies that for $\lam \leq 4\eps_0/\gamma$ we have:
\begin{align*}
 Pr(\frac 1{\lam}&||\trho_{S^c}-\rho_{S^c}||_ \infty  + \frac 1{\lam}||G(\tilde \rho_{S}-\rho_{S})||_\infty < \gamma/2 ) \\
 &\geq Pr\left(\frac 1\lambda ||\trho-\rho\linf < \gamma/4\right) \geq 1-\delta(\lambda \gamma, \zeta)
 \end{align*}
where the last inequality follows from taking union bounds on the second equation in (\ref{Eq: close}). 

The term $(\rho_{S^c}-G\rho_S)=\frac 1n X_{S^c}'(I-X_S(X_S'X_S)^{-1}X_S')w$ is a linear combination of sub-Gaussian random variables. A direct application of (\ref{eq:hoeff}) yields that $Pr((1/\lam)\| \rho_{S^c}-G\rho_S \linf \geq \gamma/4) \leq \delta(\lambda\gamma,\zeta)$ where $\zeta$ is redefined as maximum of the previous $\zeta$ and $\sigs$. 

Without loss of generality, we assume that $\eps_0 \leq 1$. Then with probability greater than $1-\dt$, we can write $||\trho_S-\rho_S||_ \infty + ||\rho_S||_ \infty \leq ||\trho_S-\rho_S||_ \infty + ||\frac 1n X'_Sw||_ \infty + ||\frac 1n X'_SX_S \beta^*_S||_ \infty \leq 2+ B\psi$ for $\eps \leq min(1,\eps_0)$. Combining this with Lemma~\ref{Lem: extrabounds}, we have, with probability at least $1-\delta(\eps,\zeta)$
\[ ||H||_ \infty  \left(1 + \frac 1{\lam} ||\trho_S-\rho_S||_ \infty + \frac 1{\lam} ||\rho_S||_ \infty \right) \leq (1+\frac 1\lambda(2+B\psi)) \; \frac {\eps\phi(2-\gamma)}{(1-\phi\eps)}  \leq \frac \gamma 8\]
for $\eps \leq \eps_0^*$ where $\eps_0^*=\min(\eps_0,\gamma\lambda \phi^{-1}\left(8(2-\gamma)(\lambda+2+B\psi)+\gamma\lambda\right)^{-1})$.

Combining all the probabilities and adjusting for the invertibility probability, for $\lam \leq 4\eps_0/\gamma$ and $\eps \leq min(\eps_0^*,C_{\min}/2)$, we have $Pr(||\tu_{S^c}||_ \infty \geq 1-\gamma/8 ) \leq \delta(\lambda\gamma,\zeta) + \dt$.
\end{proof}

\begin{proof}[Proof of Theorem \ref{Th: gen} Parts (b) and (c)]
Using the expression of $\betah_S$ from Equation~(\ref{Eq: dualfeas vector}), we expand
\begin{align*}
\betah_S-\beta^*_S= &\tSig_{S,S}^{-1}(\trho_S-\rho_S+\frac 1n X_S'X_S \beta_S^*+\frac 1n X_S'w -\lam \tu_S)-\beta_S^*\\
=& F_{S,S}(\trho_S-\rho_S+\frac 1n X_S'X_S \beta_S^* + \frac 1n X_S'w)\\
& + \Sigma_{S,S}^{-1}(\trho_S-\rho_S) + \frac 1n\Sigma_{S,S}^{-1}  X_S'w -\lam \tSig_{S,S}^{-1} \tu_S 
\end{align*}

We analyze each of the terms above separately. From the definition of sub-Gaussian vectors in (\ref{def:sgvec}) we observe that $\frac 1n \Sigma_{S,S}^{-1} X_S'w$ is sub-Gaussian with parameter at most $\sigs C_{\min} / n$. This implies that $||\frac 1n \Sigma_{S,S}^{-1}  X_S'w\linf$ is less than $\lam  / \sqrt{C_{\min}}$ with probability at least $1-\delta(\lambda,\zeta)$. Moreover, as $\tSig=\Sigma+F$, from Lemma~\ref{Lem: extrabounds} we have with probability at least $1-\dt$, for $\eps \leq  min(\eps_0,(2\phi)^{-1})$:
\[ \|\tSig_{S,S} \linf \leq \phi+ ||F\linf \leq  \phi + \phi^2 \eps(1-\phi\eps)^{-1}  \leq 2\phi \] 
The closeness condition for $\trho$ in Equation~(\ref{Eq: close}) implies that $||\trho_S-\rho_S\linf \leq \lam $ with probability at least $1-\delta(\lam,\zeta)$ for $\lam \leq \eps_0$. Following the proof of part (a), we can also conclude that for $\eps \leq \eps_0$, we have $||\trho_S-\rho_S\linf+ || \frac 1n X_S'X_S \beta_S^* \linf  + || \frac 1n X_S'w\linf \leq (2+B\psi)$ with probability at least $1-\delta(\eps,\zeta)$. Therefore, 
\[ || F_{S,S}(\trho_S-\rho_S+\frac 1n X_S'X_S \beta_S^* + \frac 1n X_S'w) \linf < (2+B\psi) \frac {\phi^2 \eps}{1-\phi\eps} \leq \lam \phi \] 
with probability $1-\dt$ for $\eps \leq \lam \phi^{-1} (\lam + 2 + B\psi)^{-1}$. Combining all the probabilities, we have
\begin{align*}
 || \betah_S-\beta^*_S \linf \leq &  || F_{S,S}(\trho_S-\rho_S+\frac 1n X_S'X_S \beta_S^* + \frac 1n X_S'w) \linf \\
& + \phi ||\trho_S-\rho_S\linf + ||\frac 1n \Sigma_{S,S}^{-1}  X_S'w\linf  + 2\lam \phi\\
& \leq \lam \left( 4\phi + \frac 1{\sqrt{C_{\min}}} \right)
\end{align*}
with probability $1-\delta(\lam,\zeta)-\dt$ for $\eps \leq(\eps_0,C_{\min}/2, (2\phi)^{-1}, \lam \phi^{-1} (\lam + 2 + B\psi)^{-1})$ and $\lam \leq \eps_0$.

This proves part (b). If $|\beta^*_{\min}| > \lam ( 4\phi + \frac 1{\sqrt{C_{\min}}})$, then the Lasso estimate is sign consistent proving Part (c).
\end{proof}

\subsection{Proofs of Lemmas~\ref{Lem: addin} and \ref{Lem: multin}}
We assume sub-Gaussian additive or multiplicative measurement errors in Section~\ref{sec:part}. The proofs of Lemmas \ref{Lem: addin} and \ref{Lem: multin} mainly rely on the properties of sub-Gaussian random variables and vectors which can be found in Appendix~\ref{app:subg}.

\begin{proof}[Proof of Lemma \ref{Lem: addin}] 
Let $\Sigma_A=(( \sigma_{a,ij} ))$ and $b_j$ denotes the $j^{th}$ column of any matrix $B$. Then $\hSig_{add,jk}-\Sigma_{jk} = \frac 1n a_j'x_k + \frac 1n a'_kx_j + (\frac 1n a'_ja_k - \sigma_{a,jk})$. Since $\frac 1n ||x_j||_2^2 = 1 $ and the entries of $a_j$ are independent and sub-Gaussian with parameter at most $\taus$ for all $j$, property (\ref{eq:hoeff}) implies that $|(1/n)a_j'x_k|$ and $|(1/n)a_j'x_k|$ are each greater than $\eps/3$ with probability less than $C\exp\left(-cn\eps^2/\taus\right)$. Let $z_i=(a_{ij} ,a_{ik} )'$. Then $z_i$'s are independent sub-Gaussian vectors with parameter at most $\taus$. The tail probability for $\frac 1n a'_ja_k - \sigma_{a,jk}$ can now be made small using Lemma \ref{Lem: sg}. Hence $\hSig_{add}$ satisfies (\ref{Eq: close}) with $\zeta=max(\tau^4,\taus)$ and $\eps_0=c\tau^2$.

We observe that $\trho_{add,j}-\rho_j = \frac 1n a'_jX_S\beta^*_S + \frac 1n a'_jw$. Consequently $|\trho_{add,j}-\rho_j| \leq B \sum_{i=1}^s |\frac 1n a'_jx_i| > \eps/2$ with probability at most $C\exp(-n\eps^2s^{-2}\tau^{-2}B^{-2})$. Letting $z_i=(a_{ij},w_i)$, Lemma~\ref{Lem: sg} can be applied to obtain the tail bound for $\frac 1n a'_jw$. Hence, $\trho_{add}$ satisfies (\ref{Eq: close}) with $\zeta=max(\sigma^4,\tau^4,\taus B^2)$ and $\eps_0 = c \max(\sigs,\taus)$.
\end{proof}

\begin{proof}[Proof of Lemma \ref{Lem: multin}]
The proof once again relies on Lemma \ref{Lem: sg}. Let $\Sigma_M=((\sigma_{m,jk}))$, then 
\begin{align*}
\hSig_{mult,jk}-\Sigma_{jk}  = & \frac 1n \sum_{i=1}^n \frac{ x_{ij} x_{ik}}{\mu_j \mu_k + \sigma_{m,jk}}  (m_{ij} m_{ik} - \mu_j \mu_k - \sigma_{m,jk})\\
= & \frac 1n \sum_{i=1}^n \frac{ x_{ij} x_{ik}}{\mu_j \mu_k + \sigma_{m,jk}} ((m_{ij} - \mu_j)(m_{ik} - \mu_k) - \sigma_{m,jk})\\
& +\frac 1n \sum_{i=1}^n \frac{ x_{ij} x_{ik}}{\mu_j \mu_k + \sigma_{m,jk}} (\mu_j(m_{ik} - \mu_k)+\mu_k(m_{ij} - \mu_j))
\end{align*}
Using the regularity conditions in Equation \ref{Eq: multcond}, we have, 
\begin{align}\label{eq:split}
  |\hSig_{mult,jk}- \Sigma_{jk}| \leq & \frac 1{M_{\min}} |(1/n) \sum_{i=1}^n x_{ij} x_{ik} ((m_{ij}- \mu_j) ( m_{ik} -\mu_k) - \sigma_{m,jk})|\\
 & + \frac {\mu_{\max}}{M_{\min}} | (1/n)  \sum_{i=1}^n x_{ij}x_{ik} (m_{ik} - \mu_k)| \nonumber \\
 & + \frac {\mu_{\max}}{M_{\min}} |(1/n) \sum_{i=1}^n x_{ij}x_{ik}(m_{ij} - \mu_j)| \nonumber
\end{align}
We denote the three terms on the right hand side of (\ref{eq:split}) by $T_1$, $T_2$ and $T_3$ respectively. Note that, if $v=(v_1,v_2,\ldots,v_n)$ where $v_i=x_{ij}x_{jk}$, then $||v||_\infty \leq X_{\max}^2$ As, the errors are once again sub-Gaussian, using Lemma \ref{Lem: sg}, we see that for $ \zeta=\max(\tau^4X_{\max}^4/M_{\min}^2, \tau^2X^2_{\max}\mu_{\max}^2/M_{\min}^2)$ and $\eps \leq c\taus X^2_{\max} /M_{\min}$ we have:
\begin{align*}
Pr(T_1 \geq \eps) & \leq C\exp\left(-cn\eps^2\zeta^{-1}\right) \mbox{ for }
\end{align*}
The terms $T_2$ and $T_3$ can be similarly bounded using property (\ref{eq:hoeff}).
This proves that $\hSig_{mult}$ satisfies (\ref{Eq: close}). We now show that $\trho_{mult}$ also satisfies (\ref{Eq: close}). Recall that

$\trho_{mult,j}-\rho_j = (1/n) (z_j-\mu_jx_j)'y/\mu_j$. As $y=X_S\beta^*_S+w$, we have
\begin{align*}
 |\trho_{mult,j}-\rho_j| \leq & \frac 1{\mu_{\min}} \sum_{k=1}^s |\frac 1n (z_j-\mu_jx_j)' x_k \beta^*_k|+ \frac 1{\mu_{\min}} |\frac 1n (z_j-\mu_jx_j)' w| \\
  \leq & \frac B{\mu_{\min}}  \sum_{k=1}^s |(1/n) \sum_{i=1}^n x_{ij}x_{ik} (m_{ij}-\mu_j)| \\
  & + \frac 1{\mu_{\min}} |(1/n)\sum_{i=1}^n x_{ij}w_j(m_{ij}-\mu_j)| 
\end{align*}
Using Lemma \ref{Lem: sg}, we have for $\zeta= X_{\max}^2 \max(\tau^2B^2/, \tau^4, \sigma^4 ) /\mu_{\min}^2  $ and $\eps \leq c X_{max} \max (\taus ,\sigs) /\mu_{\min} $:
\begin{align*}
& Pr(\frac 1{\mu_{\min}} |(1/n)\sum_{i=1}^n x_{ij}w_j(m_{ij}-\mu_j)| \geq \eps/2)  \leq C\exp\left(-cn\eps^2\zeta^{-1}\right)\\
& Pr(\frac B{\mu_{\min}} |(1/n) \sum_{i=1}^n x_{ij}x_{ik} (m_{ij}-\mu_j)| \geq \eps/{2s})  \leq C\exp\left(-cn\eps^2s^{-2}\zeta^{-1}\right)  
\end{align*}
where the last inequality follows from property (\ref{eq:hoeff}).
\end{proof} 

\section*{Acknowledgment} The authors thank Dr. Po-Ling Loh for sharing  R and Matlab codes for computing the NCL estimator. Zou is partially supported by a NSF grant.

\appendix

\section{Algorithm for finding the Nearest positive semi-definite matrix}\label{app:admm}
We use an alternating direction method of multipliers to solve for
\begin{equation}\label{eq:npsd}
\hat A=\underset{A \geq \eps I}{\arg \min } ||A-\hSig||_ {\max}
\end{equation}
for any $\eps > 0$. We introduce an additional variable $B$ and an equality constraint $B=A-\hSig$ to rewrite the optimization problem in (\ref{eq:npsd}) as
\begin{equation}\label{eq:reparam}
(\hat A, \hat B) = \underset{A \geq \eps I, \; B=A-\hSig}{\arg \min } ||B||_ {\max}
\end{equation}

To solve (\ref{eq:reparam}) we will minimize the augmented Lagrangian function:
\begin{equation}\label{eq:lagrange}
f(A,B,\Lambda)=\frac 12 ||B||_ {\max}-\langle \Lambda, A-B-\hSig \rangle + \frac 1{2\mu} ||A-B-\hSig||_F^2
\end{equation}
where $\mu$ is some penalty parameter, $\Lambda$ is the Lagrangian matrix and $\langle \cdot , \cdot \rangle$ denotes the matrix inner product which induces the Frobenius norm $||\cdot||_F$. We solve for the minimizer of $f(A,B,\Lambda)$ iteratively using the following three steps at the $i^{th}$ iteration:
\begin{align}\label{eq:steps}
A \mbox{ step: } & A_{i+1}= \underset{A \geq \eps I}{\arg \min} f(A,B_i,\Lambda_i)  \nonumber\\
B \mbox{ step: } & B_{i+1}= \underset{B}{\arg \min} f(A_{i+1},B,\Lambda_i) \\
\Lambda \mbox{ step: } & \Lambda_{i+1} = \Lambda_i - \frac {A_{i+1}-B_{i+1}-\hSig}{\mu} \nonumber
\end{align}
We now provide the closed-form solutions for the first two steps in Equation~(\ref{eq:steps}). The A step can be simplified as:
\begin{align*}
	 \underset{A \geq \eps I}{\arg \min} f(A,B_i,\Lambda_i) & = 
	\underset{A \geq \eps I}{\arg \min} \; 1/(2\mu) ||A-B_i-\hSig||_F^2-\langle \Lambda_i, A \rangle \\
 	& = \underset{A \geq \eps I}{\arg \min} ||A-B_i-\hSig-\mu \Lambda_i||_F^2
\end{align*}
The unconstrained solution for the A-step is $B_i+\hSig+\mu \Lambda_i$. Let for any symmetric matrix $Z$, $Z_\eps$ denote the projection of $Z$ into the space of matrices with eigen values greater than $\eps$. If $Z=\sum_j \lambda_j p_j p_j' $ denote the spectral decomposition of $Z$, then we have $Z_\eps = \sum_j max(\lambda_j,\eps) p_j p_j' $. Hence, the solution for the A-step is given by,
\begin{equation}\label{eq:astep}
A_{i+1}=(B_i+\hSig+\mu \Lambda_i)_\eps
\end{equation}
The B-step is equivalent to:
\begin{align}
	 & \underset{B}{\arg \min} \; \frac 12 ||B||_{\max} + \frac 1{2\mu} ||B-(A_{i+1}-\hSig)||_F^2 - \langle -\Lambda_i, B\rangle \\
	 & = \underset{B}{\arg \min} \; ||B-(A_{i+1}-\hSig-\mu\Lambda_i)||_F^2 +  \mu ||B||_{\max}  \nonumber
\end{align}
Let for any symmetric matrix $M$, $vec_L(M)$ denote the vector containing the lower half elements (including the diagonal) of $M$. Since, $vec_l$ is an injective mapping, we can define an inverse mapping $mat_l(x)$ such that $mat_l(vec_l(M))=M$ for any symmetric matrix $M$. The solution to the B-step is given by 
\[ B_{i+1}= mat_l(vec_l(A_{i+1}-\hSig-\mu \Lambda_i) - \ell_1(vec_l(A_{i+1}-\hSig-\mu \Lambda_i, \mu)))\]
where for any vector $x$ and $\mu > 0$, $\ell_1(x,\mu)$ is the projection of $x$ into the $\ell_1$ ball of radius $\mu$. The algorithm to calculate $\ell_1(x,\mu)$ is provided in \cite{duchi08}.

\begin{algorithm}
\caption{ADMM algorithm for finding the nearest positive semi-definite matrix}
\label{algo:admm}
\begin{algorithmic}[1]
\\Input $\mu$ and the initial values $B_0$ and $\Lambda_0$
\\At the $i^{th}$ step update:
\begin{enumerate}[label=2.{\arabic*}:]
\item (Step $A$) $A_{i+1}=(B_i+\hSig+\mu \Lambda_i)_\eps$
\item (Step $B$) $B_{i+1}=mat_l(vec_l(A_{i+1}-\hSig-\mu \Lambda_i) - \ell_1(vec_l(A_{i+1}-\hSig-\mu \Lambda_i, \mu)))$
\item (Step $\Lambda$) $\Lambda_{i+1} = \Lambda_i - \frac {A_{i+1}-B_{i+1}-\hSig}{\mu}$
\end{enumerate}
\\ Repeat Step 2 till convergence
\end{algorithmic}
\end{algorithm}

\section{Sub-Gaussian Random Variables}\label{app:subg}
In our analysis of the CoCoLasso estimate, we have assumed that the errors $w$ are independent and identically distributed sub-Gaussian random variables with parameter $\tau^2$. In this Section, we summarize some useful definitions and properties of sub-Gaussian random variables. 
\begin{adefn} (Sub-Gaussian random variables, \citeauthor {roman}  \citeyear{roman} \cite{roman}) A random variable $Z$ is said to be sub-Gaussian if there exists a finite $\kappa > 0$ such that $\kappa= \underset{p \geq 1}{\sup} \; p^{-1/2} (E|X|^p)^ \frac 1p$. $\kappa$ is referred to as the sub-Gaussian norm of $Z$ denoted by $||Z||_\phi$
\end{adefn}
Equivalently, a sub-Gaussian random variable $Z$ satisfies the following tail probability bounds:
\begin{equation}\label{eq:subgtail}
P(|Z| > t) \leq 2\exp(- t^2 /2\taus) \mbox{ for all } t>0
\end{equation}
To avoid ambiguity, we refer to the sub-Gaussian parameter of $Z$ as the smallest $\taus$ satisfying (\ref{eq:subgtail}). Following \cite[Lemma 5.5]{roman} we observe that there exists universal constants $m$ and $M$ such that $m||Z||^2_\phi \leq \taus \leq M||Z||^2_\phi$. We note that if $w=(w_1,w_2,\ldots,w_n)'$ that $w_i$'s are independent zero-centered sub-Gaussian random variables, then weighted sums of $w_i$ are also sub-Gaussian and satisfy an useful property \cite[Lemma 5.9]{roman}:
\begin{equation}\label{eq:hoeff}
||v'w||^2_\phi \leq K ||v||_2^2 \; \underset{i}{\max} (|| w_i||_\phi^2)
\end{equation}
where $K$ is an absolute constant. The tail-probability characterization in (\ref{eq:subgtail}) enables defining sub-Gaussian random vectors in the following sense:
\begin{adefn}({Sub-Gaussian random vectors, \citeauthor{covmat} \citeyear{covmat} \cite{covmat}})\label{def:sgvec}
A random vector $w$ is said to be sub-Gaussian if there exists $\tau > 0$ such that $Pr(|v'(w-E(w))| > t) \leq 2\exp(-\frac {t^2}{2\taus})$ for all $t>0$ and $||v||_2=1$.
\end{adefn}

From property (\ref{def:sgvec}) it is clear that if $w=(w_1,w_2,\ldots,w_n)'$ is a sub-Gaussian vector with parameter $\taus$, then each $w_i$ is also sub-Gaussian with parameter at most $\taus$. Conversely if $w_i$'s are independent and sub-Gaussian random variables with parameter $\taus_i$, then $w=(w_1,w_2,\ldots,w_n)$ is a sub-Gaussian vector with parameter at most $\taus \leq (KM/m) (\max \taus_i)$. We now state and prove another useful result for correlated sub-Gaussian sequences:
\begin{alemma}\label{Lem: sg}
Let $z_i =(x_i,y_i)'$ denote independent and identically distributed vectors with zero mean, covariance $\Sigma=((\sigma_{ij}))$ and  sub-Gaussian parameter $\taus$. Then there exists absolute constants $C$ and $c$ such that, for every $\eps \leq c\taus\|a\linf$, we have:
\begin{equation}\label{Eq: sg2}
Pr(\frac 1n |\sum_{i=1}^n a_i(x_iy_i - \sigma_{12})| \geq \eps) \leq C \exp\left(-\frac {nc\eps^2}{\tau^4||a\linf^2} \right)
\end{equation}
\end{alemma}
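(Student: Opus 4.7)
The plan is to reduce the bound to a standard Bernstein-type concentration inequality for a weighted sum of independent, centered sub-exponential random variables. The key observation is that while $x_iy_i$ is not sub-Gaussian, it is sub-exponential because it is the product of two sub-Gaussian random variables, and Bernstein's inequality yields exactly the mixed (sub-exponential) tail that, restricted to small $\eps$, matches the claimed Gaussian-type bound.

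First I would record a few preparatory facts about the coordinates. Since $z_i = (x_i,y_i)'$ is sub-Gaussian with parameter $\tau^2$ in the sense of Definition~\ref{def:sgvec}, applying the definition to $v = (1,0)'$ and $v=(0,1)'$ shows that each of $x_i$ and $y_i$ is sub-Gaussian with parameter at most $\tau^2$. By the equivalence between the tail bound (\ref{eq:subgtail}) and the Orlicz norm $\|\cdot\|_\phi$ (cf.\ Lemma 5.5 of \cite{roman}), $\|x_i\|_\phi, \|y_i\|_\phi \leq C\tau$ for a universal constant $C$. Standard sub-Gaussian/sub-exponential calculus then gives that $x_iy_i$ is sub-exponential with sub-exponential (Orlicz-$\psi_1$) norm at most $\|x_i\|_\phi\|y_i\|_\phi \leq C\tau^2$. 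Subtracting the mean $\sigma_{12}=E[x_iy_i]$ leaves us with $W_i := x_iy_i - \sigma_{12}$, which is centered and sub-exponential with norm still bounded by $K := C\tau^2$ (up to a different absolute constant).

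Next, I would invoke Bernstein's inequality for weighted sums of independent, mean-zero, sub-exponential variables (e.g., Proposition 5.16 in \cite{roman}), applied to the coefficients $c_i = a_i/n$. Using $\sum_i c_i^2 = n^{-2}\sum_i a_i^2 \leq n^{-1}\|a\|_\infty^2$ and $\max_i |c_i| \leq \|a\|_\infty / n$, this yields
\begin{equation*}
Pr\!\left(\Big|\tfrac{1}{n}\sum_{i=1}^n a_i W_i\Big| \geq \eps\right) \leq 2\exp\!\left(-c\,\min\!\left\{\frac{n\eps^2}{K^2\|a\|_\infty^2},\;\frac{n\eps}{K\|a\|_\infty}\right\}\right)
\end{equation*}
for an absolute constant $c>0$.

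Finally, I would check which of the two exponents in the minimum is active in the stated regime. The quadratic branch dominates precisely when $\eps/(K\|a\|_\infty) \leq 1$, i.e. when $\eps \leq C\tau^2\|a\|_\infty$, which is exactly the hypothesis $\eps \leq c\tau^2\|a\|_\infty$ in the lemma (after rescaling $c$). In that regime the bound becomes $C\exp(-cn\eps^2/(\tau^4\|a\|_\infty^2))$, as claimed.

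The only subtlety—and the step most worth being careful about—is the sub-exponential product estimate: establishing that $\|x_iy_i - \sigma_{12}\|_{\psi_1} \leq C\tau^2$ uniformly in the joint distribution of $(x_i,y_i)$, rather than merely their marginals. Because Definition~\ref{def:sgvec} provides a uniform sub-Gaussian tail on every one-dimensional projection of $z_i$, the Cauchy--Schwarz-type bound $\|x_iy_i\|_{\psi_1} \leq \|x_i\|_{\psi_2}\|y_i\|_{\psi_2}$ applies without any independence assumption on $x_i,y_i$, and centering only costs another universal constant. Once this is in hand, the remainder of the argument is a direct appeal to Bernstein and a case check on $\eps$, so I expect no further obstacles.
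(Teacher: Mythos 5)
Your proof is correct, and it reaches the same endpoint (Bernstein's inequality for sums of independent centered sub-exponential variables, with the hypothesis $\eps \leq c\taus\|a\linf$ ensuring the quadratic branch of the mixed tail is active) by a genuinely different route. The paper does not use the product bound $\|XY\|_{\psi_1}\leq\|X\|_{\psi_2}\|Y\|_{\psi_2}$ at all; instead it applies the polarization identity $x_iy_i=\tfrac14\bigl[(x_i+y_i)^2-(x_i-y_i)^2\bigr]$, rewrites the sum as two weighted sums of centered squares $(v_k'z_i)^2-E[(v_k'z_1)^2]$ with unit vectors $v_1=(1/\sqrt2,1/\sqrt2)'$, $v_2=(1/\sqrt2,-1/\sqrt2)'$, notes each projection $v_k'z_i$ is sub-Gaussian with parameter at most $\taus$ by the vector definition, and then invokes the squares-of-sub-Gaussians-are-sub-exponential facts (Lemma 5.14, Remark 5.18) followed by Corollary 5.17, folding the weights $a_i$ into the sub-exponential parameter via $\|a\linf$. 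The trade-off: the paper's polarization argument stays entirely inside the toolbox of its cited reference, whereas your key step --- $\|x_iy_i-\sigma_{12}\|_{\psi_1}\leq C\tau^2$ without independence of $x_i$ and $y_i$ --- is standard but does not appear in that reference, so you would need to prove it (the Cauchy--Schwarz argument you sketch, $E\exp(|XY|/K_1K_2)\leq\bigl(E\exp(X^2/K_1^2)\bigr)^{1/2}\bigl(E\exp(Y^2/K_2^2)\bigr)^{1/2}$, does the job); you correctly identified this as the one step requiring care. In exchange, your argument is slightly more general (it needs only marginal sub-Gaussianity of the coordinates rather than joint sub-Gaussianity of $z_i$) and handles the weights more sharply, since the weighted Bernstein inequality (Proposition 5.16) retains $\|a\|_2$ in the quadratic branch rather than degrading everything to $\|a\linf$ as the paper does; both simplifications collapse to the same stated bound because the lemma's conclusion is phrased in terms of $\|a\linf$ anyway.
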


\begin{proof}
\begin{align*} (1/n)\sum_{i=1}^n a_i(x_iy_i - \sigma_{12})  = \frac 1{4n}\sum_{i=1}^n & a_i\left((x_i+y_i)^2 - (\sigma_{11} + \sigma_{22}+2\sigma_{12})\right) \\
- \frac 1{4n}\sum_{i=1}^n & a_i \left( (x_i-y_i)^2 - (\sigma_{11}+\sigma_{22}-2\sigma_{12}) \right)\\
= \frac1{2n} \sum_{i=1}^n a_i\left( (v_1'z_i)^2 - E((v_1'z_1)^2) \right) & - \frac1{2n} \sum_{i=1}^n a_i\left( (v_2'z_i)^2 - E((v_2'z_1)^2) \right)
\end{align*}
where $v_1=  (1/\sqrt 2, 1/\sqrt 2)'$ and $v_1=  (1/\sqrt 2, -1/\sqrt 2)'$. 
As $||v_k||=1$, $v_k'z_1$ is sub-Gaussian with parameter at most $\taus$ for $k=1,2$. Using the relationship between sub-Gaussian and sub-exponential random variables in \cite[Lemma 5.14 and Remark 5.18]{roman}, we see that, for $k=1,2$, $(v_k'z_i)^2 - E((v_k'z_1)^2)$ is sub-exponential with parameter at most $c\taus$ where $c$ is an absolute constant. As a result $t_i=a_i((v_1'z_i)^2 - E((v_1'z_1)^2)$ is sub-exponential with parameter at most  $c\taus||a\linf$. A direct application of \cite[Corollary 5.17]{roman} now yields for $\eps \leq c \taus ||a\linf$, \[ Pr\left(\frac 1{2n}|\sum _{i=1}^n t_i | \geq \eps \right) \leq C\exp\left(-\frac {nc\eps^2}{\tau^4 ||a\linf^2} \right)\]

\end{proof}
\bibliographystyle{imsart-nameyear}
\bibliography{COCObib}

\end{document}